\newtheorem{remark}{\indent Remark}
\begin{document}

\title{Numerical analysis of a bdf2 modular grad-div Stabilization method for the Navier-Stokes equations}

\author{Y. Rong\thanks{Xi'an Jiaotong University, School of Mathematics and Statistics, Xi'an, Shaanxi 710049, China.  Support from NSFC grants 11171269 and 11571274 and China Scholarship Council grant 201606280154.}
\and J. A. Fiordilino\thanks{University of Pittsburgh, Department of Mathematics, Pittsburgh, PA 15260.  The research presented herein was partially supported by NSF grants CBET 1609120 and DMS 1522267.  J.A.F. is supported by the DoD SMART Scholarship.}}

\maketitle

\begin{abstract}
A second-order accurate modular algorithm is presented for a standard BDF2 code for the Navier-Stokes equations (NSE).  
The algorithm exhibits resistance to solver breakdown and increased computational efficiency for increasing values of grad-div parameters.  
We provide a complete theoretical analysis of the algorithms stability and convergency.
Computational tests are performed and illustrate the theory and advantages over monolithic grad-div stabilizations.
\end{abstract}

\section{Introduction}\label{introduction}
A common, powerful tool for improving solution quality for fluid flow problems is grad-div stabilization \cite{Jenkins,Linke2,LeBorne,Olshanskii2, Olshanskii4}.  This technique typically involves adding $\gamma \nabla \nabla \cdot u_{h}$, nonzero for most finite element velocity-pressure pairs, which penalizes mass conservation and improves solution accuracy.
It was first introduced in \cite{Franca} and has been widely studied since, both analytically and computationally
\cite{Decaria,Jenkins,Layton3,Linke2,Lube,Olshanskii,Olshanskii2,Olshanskii4}.

Unfortunately, grad-div stabilization also exhibits increased coupling in the linear system's matrix, efficiency loss and solver breakdown, and classical Poisson locking \cite{Bowers,Guermond,Heister,Linke,LeBorne,Olshanskii,Olshanskii2}. 
In particular, since the matrix arising from grad-div term is singular,
large grad-div parameter $\gamma$ values can cause solver breakdown \cite{Glowinski}.
This difficulty cannot always be circumvented since recommended parameter choices vary greatly, e.g., from $\mathcal{O}(h^{2})$ to $\mathcal{O}(10^{4})$
for different applications, finite elements, and meshes \cite{Decaria,Jenkins,John2,Olshanskii4,Roos}.
An alternate realization of grad-div stabilization with greater computational efficiency was introduced in \cite{Fiordilino} for the backward Euler time discretization.  Herein, we show how to implement modular grad-div stabilization for any multistep time discretization and perform analysis and testing for the BDF2 case.

To begin, consider the incompressible time-dependent NSE: Find the fluid velocity $u:\Omega\times[0,T]\rightarrow\mathbb{R}^{d}$
and pressure $p:\Omega\times(0,T]\rightarrow\mathbb{R}$ satisfying:
\begin{equation}\label{NSE}
\begin{aligned}
&u_{t}+u\cdot\nabla u-\nu\Delta u+\nabla p
= f,\; \mathrm{and} \;
\nabla\cdot u=0\;
\mathrm{in} \;\Omega,\\
&u=0\;
\mathrm{on} \;\partial\Omega,\; \mathrm{and} \;
\int_{\Omega} p\,dx
= 0,\\
&u(x,0)=u^{0}(x)\;
\mathrm{in} \;\Omega.\\
\end{aligned}
\end{equation}
Here, the domain $\Omega\subset \mathbb{R}^{d}$($\mathrm{d}$=2,3) is a bounded polyhedron, $f$ is the body force and $\nu$ is the fluid viscosity.
Suppressing the spacial discretization for the moment, we consider the following two step method that uncouples the grad-div solve.
\\
$\emph{Step 1}:$ Given $u^{n-1}, u^{n}$, find $\hat{u}^{n+1}$ and $p^{n+1}$ satisfying:
\begin{eqnarray}
&\frac{3\hat{u}^{n+1} - 4u^{n} + u^{n-1}}{2\Delta t}
+ (2u^{n}-u^{n-1})\cdot\nabla\hat{u}^{n+1}
- \nu \Delta\hat{u}^{n+1}
+ \nabla p^{n+1}
= f^{n+1},   \label{step01.1}  \\
&\nabla \cdot \hat{u}^{n+1}=0.  \label{step01.2}
\end{eqnarray}
$\emph{Step 2}:$ Given $\hat{u}^{n+1}$, find $u^{n+1}$ satisfying:
\begin{eqnarray}
&\frac{3u^{n+1}-3\hat{u}^{n+1}}{2\Delta t}
- \beta \nabla \nabla \cdot \frac{3u^{n+1} - 4u^{n} + u^{n-1}}{2\Delta t}
- \gamma \nabla \nabla \cdot u^{n+1} = 0.  \label{step02}
\end{eqnarray}
In the above, $\beta \geq 0$ and $ \gamma \geq 0$ are application-dependent grad-div stabilization parameters. The combined effect of $\emph{Step 1}$ and $\emph{Step 2}$
is a consistent BDF2 time discretization of the following model:
\begin{equation}\label{grad-div NSE}
\begin{aligned}
&u_{t} - \beta \nabla \nabla \cdot u_{t} - \gamma \nabla \nabla \cdot u
+ u\cdot\nabla u - \nu\Delta u+\nabla p
= f.
\end{aligned}
\end{equation}
In \cite{Fiordilino}, two minimally intrusive, modular algorithms were developed for backward Euler, which implemented grad-div stabilization.  These algorithms effectively treated issues resulting from increased coupling and solver breakdown.  Although the second steps of each of these algorithms can be used here when $\beta \equiv 0$, they cannot be used when $\beta > 0$; that is, the dispersive term \cite{Decaria2,Layton4,Prohl}, associated with $\beta$ demands special attention.  In the case $\beta > 0$, the time-discretizations in both steps must be consistent with one another.  In particular, for the BDFk family of methods:
\\
$\emph{Step 1}:$ Find $\hat{u}^{n+1}$ and $p^{n+1}$ satisfying:
\begin{eqnarray}
&\frac{1}{\Delta t}\big(a_{0}\hat{u}^{n+1} + \sum^{S}_{s=1} a_{s} u^{n+1-s}\big)
+ U\cdot\nabla\hat{u}^{n+1}
- \nu \Delta\hat{u}^{n+1}
+ \nabla p^{n+1}
= f^{n+1},     \\
&\nabla \cdot \hat{u}^{n+1}=0.  
\end{eqnarray}
$\emph{Step 2}:$ Find $u^{n+1}$ satisfying:
\begin{eqnarray}
&\frac{a_{0}}{\Delta t}\big(u^{n+1}-\hat{u}^{n+1}\big)
- \beta \nabla \nabla \cdot \frac{\sum^{S}_{s=0} a_{s} u^{n+1-s}}{\Delta t}
- \gamma \nabla \nabla \cdot u^{n+1} = 0,  
\end{eqnarray}
where $U$ denotes either $\hat{u}^{n+1}$ or a consistent extrapolation. A similar generalization can be made for general linear multistep methods.

This paper is arranged as follows.
Section $\ref{Preliminaries}$ introduces notation, lemmas, and necessary preliminaries.
In Section $\ref{Algorithm and Stability}$, a fully-discrete modular grad-div stabilization algorithm (\textit{BDF2-mgd}) and its unconditional, nonlinear, energy stability are presented. 
A complete error analysis is given in Section $\ref{Error Analysis}$
where second-order convergence is proven for the modular method.
Numerical experiments are provided to confirm the effectiveness of \textit{BDF2-mgd} in Section $\ref{Numerical Tests}$.  In particular, the algorithm maintains the positive impact of grad-div stabilization while resisting debilitating slow down for $0\leq \gamma \leq 20,000$ or $0 \leq \beta \leq 8,000$.
Conclusions follow in Section $\ref{Conclusion}$.

\section{Preliminaries}\label{Preliminaries}
We use the standard notations $H^{k}(\Omega), H^{k}_{0}(\Omega)$, and $L^{p}(\Omega)$
to denote Sobolev spaces and $L^{p}$ spaces; see, e.g., \cite{Ad}.
The $L^{2}(\Omega)$ inner product and its induced norm are denoted by $(\cdot,\cdot)$ and $\|\cdot\|$, respectively.
Let $\|\cdot\|_{L^{p}}$ and $\|\cdot\|_{k}$ denote the $L^{p}(\Omega)$ ($p\neq2$) norm and $H^{k}(\Omega)$ norm.
The space $H^{-k}(\Omega)$ denotes the dual space of $H^{k}_{0}(\Omega)$ and its norm is denoted by $\|\cdot\|_{-k}$.
Throughout the paper, we use $C$ to denote a generic positive constant varying in different places but never depending on mesh size, time step, and grad-div parameters.
For functions $v(x,t)$,
we define the following norms:
\begin{eqnarray*}
\begin{aligned}
& \|v\|_{\infty,k}:=ess\sup_{[0,T]}\|v(\cdot,t)\|_{k}\, ,\quad
  \|v\|_{p,k}:=(\int^{T}_{0}\|v(\cdot,t)\|^{p}_{k}dt)^{\frac{1}{p}}\,
\end{aligned}
\end{eqnarray*}
for $1\leq p<\infty$.
The velocity space $X$, pressure space $Q$, and divergence free space $V$ are defined as follows.
\begin{eqnarray*}
\begin{aligned}
& X:=H^{1}_{0}(\Omega)^{d}=\{v\in H^{1}(\Omega)^{d}:v|_{\partial\Omega}=0\},\\
& Q:=L^{2}_{0}(\Omega)=\{q\in L^{2}(\Omega):\int_{\Omega}q\,dx=0\},\\
& V:=\{v\in X:(\nabla\cdot v,q)=0\quad\forall q\in Q\}.
\end{aligned}
\end{eqnarray*}
Define the skew-symmetric trilinear form
\begin{eqnarray*}
b(u,v,w):=
\frac{1}{2}(u\cdot\nabla v,w)-\frac{1}{2}(u\cdot\nabla w,v)
\quad \forall\;u,v,w \in X.
\end{eqnarray*}
Then, we have the following estimates for $b$ (see, e.g., Lemma 2.2 in \cite{Layton2}):
\begin{eqnarray}\label{tri}
& b(u,v,w)\leq C\|\nabla u\|\|\nabla v\|\|\nabla w\|,\label{tri1} \\
& b(u,v,w)\leq C\|u\|^{\frac{1}{2}}\|\nabla u\|^{\frac{1}{2}}\|\nabla v\|\|\nabla w\|,\label{tri2}\\
& b(u,v,w)\leq C\|u\|\|v\|_{2}\|\nabla w\|.\label{tri3}
\end{eqnarray}
Divide the simulation time $T$ into $N$ smaller time intervals with $[0,T]=\bigcup\limits_{n=0}^{N-1}[t^{n},t^{n+1}]$,
where $t^{n}=n\Delta t, \, T=N\Delta t$.
We may define the following discrete norms:
\begin{eqnarray*}
\begin{aligned}
& |\|v\||_{\infty,k}:=\max_{0\leq n\leq N}\|v(\cdot,t^{n})\|_{k}\, ,\quad
  |\|v\||_{p,k}:=(\Delta t\sum^{N}_{n=0}\|v(\cdot,t^{n})\|^{p}_{k})^{\frac{1}{p}}\,.
\end{aligned}
\end{eqnarray*}

Let $\Omega_{h}$ be a quasi-uniform mesh of $\Omega$ with $\overline{\Omega}=\bigcup\limits_{K\in\Omega_{h}}K$.
Denote $h=\sup\limits_{K\in\Omega_{h}}diam(K)$.
Let $X_{h}\subset X$ and $Q_{h}\subset Q$ be the finite element spaces.
Assume that $X_{h}$ and $Q_{h}$ satisfy approximation properties of piecewise continuous polynomials on quasi-uniform meshes
of local degrees $k$ and $m$, respectively:
\begin{eqnarray}
& \inf\limits_{v_{h}\in X_{h}}\|u-v_{h}\|
\leq Ch^{k+1}|u|_{k+1}\qquad u\in X\cap H^{k+1}(\Omega)^{d},\\
& \inf\limits_{v_{h}\in X_{h}}\|u-v_{h}\|_{1}
\leq Ch^{k}|u|_{k+1}\qquad\;\;u\in X\cap H^{k+1}(\Omega)^{d},\\
& \inf\limits_{q_{h}\in Q_{h}}\|p-q_{h}\|
\leq Ch^{m+1}|p|_{m+1}\qquad\;\;\; p\in Q\cap H^{m+1}(\Omega).
\end{eqnarray}
Furthermore, we assume that $X_{h}$ and $Q_{h}$ satisfy the usual discrete inf-sup condition:
\begin{eqnarray}\label{inf-sup}
\begin{aligned}
\inf\limits_{q\in Q_{h}}\sup\limits_{v\in X_{h}}\frac{(q,\nabla\cdot v)}{\|\nabla v\|\|q\|}\geq C_{0}>0.
\end{aligned}
\end{eqnarray}
The discrete divergence-free space $V_{h}$ is defined by
\begin{eqnarray*}
\begin{aligned}
V_{h}:=\{v_{h}\in X_{h}:(\nabla\cdot v_{h},q_{h})=0\quad\forall q_{h}\in Q_{h}\}.
\end{aligned}
\end{eqnarray*}
Note that the well-known Taylor-Hood mixed finite element is one such example satisfying the above assumptions with $k=2, m=1$.

The following lemmas will be useful in later analyses.
For their proofs, see Theorem 1.1 on p. 59 of \cite{Girault} for Lemma \ref{lemma0},
Lemma 2 of \cite{RY2} for Lemma \ref{lemma1},
and Lemma 5.1 on p. 369 of \cite{Heywood} for Lemma \ref{gronwall2}.
\begin{lemma}\label{lemma0}
Suppose that the finite element spaces satisfy ($\ref{inf-sup}$). Then, for any $u\in V$, we have
\begin{eqnarray*}
\begin{aligned}
\inf\limits_{v_{h}\in V_{h}}\|\nabla(u-v_{h})\|
\leq C\inf\limits_{v_{h}\in X_{h}}\|\nabla(u-v_{h})\|.
\end{aligned}
\end{eqnarray*}
\end{lemma}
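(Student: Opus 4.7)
The plan is to reduce the infimum over the constrained space $V_h$ to the infimum over the larger space $X_h$ by a standard correction trick: given any approximation $\tilde{u}_h \in X_h$ to $u$, I will modify it by a small element of $X_h$ so that the resulting function is discretely divergence-free, while keeping the $H^1$ error under control. The discrete inf-sup condition (\ref{inf-sup}) is exactly what allows this correction to be bounded.

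First I would fix an arbitrary $\tilde{u}_h \in X_h$ and consider the linear functional $\ell : Q_h \to \mathbb{R}$ defined by $\ell(q_h) := (\nabla\cdot(u - \tilde u_h), q_h)$. Since $u \in V$ satisfies $\nabla\cdot u = 0$ pointwise and hence $(\nabla\cdot u, q_h) = 0$ for every $q_h \in Q_h$, this functional can equivalently be written as $\ell(q_h) = -(\nabla\cdot\tilde u_h, q_h)$ and is bounded by $\|\nabla(u-\tilde u_h)\|\,\|q_h\|$.

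The main step (and the only non-routine point) is to invoke the discrete inf-sup condition (\ref{inf-sup}) as a surjectivity statement: it guarantees the existence of $z_h \in X_h$ such that
\begin{equation*}
(\nabla\cdot z_h, q_h) = \ell(q_h) \quad \forall q_h \in Q_h,
\qquad
\|\nabla z_h\| \le C_0^{-1}\,\|\nabla(u-\tilde u_h)\|.
\end{equation*}
This follows by applying the closed range theorem to the discrete divergence operator $B_h : X_h \to Q_h'$, whose adjoint is bounded below on $Q_h$ precisely by (\ref{inf-sup}); the preimage of minimal norm gives the desired $z_h$. I expect this to be the main obstacle to articulate cleanly, since it is where the functional-analytic content of the lemma lives.

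Then I set $v_h := \tilde u_h + z_h \in X_h$. By construction, for every $q_h \in Q_h$,
\begin{equation*}
(\nabla\cdot v_h, q_h) = (\nabla\cdot\tilde u_h, q_h) + \ell(q_h) = (\nabla\cdot u, q_h) = 0,
\end{equation*}
so $v_h \in V_h$. Applying the triangle inequality and the bound on $\|\nabla z_h\|$ yields
\begin{equation*}
\|\nabla(u-v_h)\| \le \|\nabla(u-\tilde u_h)\| + \|\nabla z_h\| \le (1+C_0^{-1})\,\|\nabla(u-\tilde u_h)\|.
\end{equation*}
Since $\tilde u_h \in X_h$ was arbitrary, taking the infimum on both sides produces the claimed inequality with $C = 1+C_0^{-1}$.
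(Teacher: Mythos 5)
Your proof is correct and is essentially the argument behind the reference the paper cites in lieu of its own proof (Girault--Raviart, Theorem 1.1, p.~59): correct an arbitrary $X_h$-approximant by an element $z_h$ with prescribed discrete divergence and $\|\nabla z_h\|$ controlled via the bounded right inverse of the discrete divergence operator that the inf-sup condition (\ref{inf-sup}) provides. The only cosmetic point is that $\|\nabla\cdot w\|\leq\sqrt{d}\,\|\nabla w\|$, so the resulting constant is $1+\sqrt{d}\,C_{0}^{-1}$ rather than $1+C_{0}^{-1}$, which is immaterial to the statement.
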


\begin{lemma}\label{lemma1}
If $g_{t}, g_{tt}, g_{ttt}\in L^{2}(0,T;H^{r}(\Omega))$, then we have
\begin{equation}\label{lemma1.1}
\begin{aligned}
\|g^{n+1}-2g^{n}+g^{n-1}\|^{2}_{r}\leq C\Delta t^{3}\int^{t^{n+1}}_{t^{n-1}}\|g_{tt}\|^{2}_{r}dt,
\end{aligned}
\end{equation}
\begin{equation}\label{lemma1.2}
\begin{aligned}
\|3g^{n+1}-4g^{n}+g^{n-1}\|^{2}_{r}\leq C\Delta t\int^{t^{n+1}}_{t^{n-1}}\|g_{t}\|^{2}_{r}dt,
\end{aligned}
\end{equation}
\begin{equation}\label{lemma1.3}
\begin{aligned}
\|\frac{3g^{n+1}-4g^{n}+g^{n-1}}{2\Delta t}-g_{t}(t^{n+1})\|^{2}_{r}
\leq C\Delta t^{3}\int^{t^{n+1}}_{t^{n-1}}\|g_{ttt}\|^{2}_{r}dt.
\end{aligned}
\end{equation}
\end{lemma}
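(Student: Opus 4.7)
The three inequalities are standard BDF2 consistency estimates, and my plan is to derive all of them by a single unified technique: Taylor expansion with integral remainder in the time variable (pointwise in $x$), followed by Cauchy--Schwarz to pass from $L^1$--in--time bounds to $L^2$--in--time bounds. The norm $\|\cdot\|_r$ plays no active role beyond allowing me to pull it inside the time integrals via the Bochner triangle inequality, so I will essentially be doing scalar-valued calculus on Banach-space--valued functions.

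For \eqref{lemma1.2}, the easiest of the three, I would write $3g^{n+1}-4g^n+g^{n-1} = 3(g^{n+1}-g^n) - (g^n-g^{n-1})$ and replace each difference by an integral of $g_t$ over a subinterval of $[t^{n-1},t^{n+1}]$. Taking $\|\cdot\|_r$ inside the integrals and squaring gives the target bound after one application of Cauchy--Schwarz in time (which produces the factor $\Delta t$). For \eqref{lemma1.1}, I would use a similar rearrangement: writing $g^{n+1}-2g^n+g^{n-1}=(g^{n+1}-g^n)-(g^n-g^{n-1})$ and then re-expressing the difference of two $g_t$-integrals as a double integral of $g_{tt}$ via the identity
\begin{equation*}
g^{n+1}-2g^n+g^{n-1}=\int_0^{\Delta t}\!\!\int_{-\tau}^{\tau} g_{tt}(t^n+\sigma)\,d\sigma\,d\tau .
\end{equation*}
Two applications of Cauchy--Schwarz (one in $\sigma$, one in $\tau$) produce the $\Delta t^{3}$ prefactor and leave the $L^{2}$-in-time norm of $g_{tt}$ over $[t^{n-1},t^{n+1}]$.

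For \eqref{lemma1.3}, the consistency estimate, my plan is to Taylor expand $g^n$ and $g^{n-1}$ about $t=t^{n+1}$ through quadratic order, using the integral form of the remainder:
\begin{equation*}
g(a)=g(b)+g'(b)(a-b)+\tfrac{1}{2}g''(b)(a-b)^2+\tfrac{1}{2}\!\int_b^a(a-s)^2 g'''(s)\,ds ,
\end{equation*}
with $b=t^{n+1}$ and $a=t^n$ (respectively $a=t^{n-1}$). Forming $3g^{n+1}-4g^n+g^{n-1}$, the polynomial parts telescope to $2\Delta t\, g_t(t^{n+1})$ by design of the BDF2 stencil, leaving only a remainder $R$ built from the two integral terms. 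Dividing by $2\Delta t$ gives the consistency error as $-R/(2\Delta t)$. I would then bound $R$ in $\|\cdot\|_r$ by pulling the norm inside, using $|a-s|\le 2\Delta t$ on each subinterval to pull out a factor $\Delta t^2$, and applying Cauchy--Schwarz on the remaining $\int_{t^{n-1}}^{t^{n+1}}\|g_{ttt}\|_r\,ds$ to convert it to the $L^2$-norm at the cost of another $\Delta t^{1/2}$. Squaring yields $\|R\|_r^2\lesssim \Delta t^5\!\int_{t^{n-1}}^{t^{n+1}}\|g_{ttt}\|_r^2\,dt$, and dividing by $(2\Delta t)^2$ gives the claimed $\Delta t^3$ bound.

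The only mild obstacle is bookkeeping in \eqref{lemma1.3}: I must verify that the constant and $\Delta t$-linear and $\Delta t^2$-quadratic coefficients in the combination $3g^{n+1}-4g^n+g^{n-1}-2\Delta t\, g_t(t^{n+1})$ indeed vanish identically, which is exactly the defining property of the BDF2 coefficients $(3,-4,1)/2$. Once that cancellation is in place, the remainder estimate is a routine Cauchy--Schwarz calculation, and none of the three inequalities requires more than elementary tools.
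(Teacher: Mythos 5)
Your proposal is correct: the paper does not prove this lemma itself but defers to Lemma 2 of the cited reference of Rong and Hou, and your argument (integral representations of the difference stencils for the first two bounds, Taylor expansion about $t^{n+1}$ with integral remainder plus the cancellation of the constant, linear, and quadratic terms by the BDF2 weights for the third, followed in each case by Cauchy--Schwarz in time to produce the stated powers of $\Delta t$) is exactly the standard derivation one finds there. The bookkeeping you flag does check out: in $3g^{n+1}-4g^{n}+g^{n-1}$ the zeroth-order coefficients sum to $0$, the first-order ones to $2\Delta t$, and the second-order ones to $0$, so the remainder is $O(\Delta t^{5/2})$ in the $L^{2}$-in-time sense and division by $2\Delta t$ yields the claimed $\Delta t^{3}$ rate.
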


\begin{lemma}{(The discrete Gronwall's lemma,\;without $\Delta t$-restriction)}\label{gronwall2}
Suppose that $n$ and $N$ are nonnegative integers,\;$n\leq N$. The real numbers $a_{n},b_{n},c_{n},\kappa_{n},\Delta t,C$ are nonnegative and satisfy
\begin{eqnarray*}
\begin{aligned}
& a_{N}+\Delta t\sum\limits_{n=0}^{N}b_{n}\leq\Delta t\sum\limits_{n=0}^{N-1}\kappa_{n}a_{n}+\Delta t\sum\limits_{n=0}^{N}c_{n}+C.
\end{aligned}
\end{eqnarray*}
Then,
\begin{eqnarray*}
\begin{aligned}
& a_{N}+\Delta t\sum\limits_{n=0}^{N}b_{n}\leq\exp(\Delta t\sum\limits_{n=0}^{N-1}\kappa_{n})(\Delta t\sum\limits_{n=0}^{N}c_{n}+C).
\end{aligned}
\end{eqnarray*}
\end{lemma}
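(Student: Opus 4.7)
The plan is to reduce the two-sided hypothesis to a single-sequence Gronwall inequality and then establish the exponential bound by induction on $N$.

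First, I would consolidate the unknowns by setting $A_n := a_n + \Delta t \sum_{k=0}^{n} b_k$ and $G_N := \Delta t \sum_{n=0}^{N} c_n + C$. Since $b_k, c_k \geq 0$, we have $a_n \leq A_n$ and $G_N$ is nondecreasing in $N$. The hypothesis then weakens to
$$A_N \leq \Delta t \sum_{n=0}^{N-1} \kappa_n A_n + G_N,$$
and it suffices to show $A_N \leq G_N \exp\!\bigl(\Delta t \sum_{n=0}^{N-1} \kappa_n\bigr)$.

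Next, I would proceed by induction on $N$. The base case $N=0$ reduces to $A_0 \leq G_0$, since the $\kappa$-sum is empty. For the inductive step, applying the induction hypothesis to each $A_k$ with $k<N$ and using monotonicity of $G$ gives $A_k \leq G_N \exp\!\bigl(\Delta t \sum_{j=0}^{k-1} \kappa_j\bigr)$. Substituting into the bound on $A_N$ and factoring out $G_N$ yields
$$A_N \leq G_N \left(1 + \sum_{k=0}^{N-1} \Delta t\, \kappa_k \exp\!\left(\Delta t \sum_{j=0}^{k-1} \kappa_j\right)\right).$$

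The main obstacle, and the only nontrivial point, is the discrete exponential estimate
$$1 + \sum_{k=0}^{N-1} \alpha_k \exp\!\left(\sum_{j=0}^{k-1} \alpha_j\right) \leq \exp\!\left(\sum_{k=0}^{N-1} \alpha_k\right)$$
for nonnegative $\alpha_k = \Delta t\, \kappa_k$. I would prove this by a short secondary induction on $N$: the base case is $1 \leq 1$, and assuming the bound at $N$, multiplying both sides by $e^{\alpha_N} \geq 1 + \alpha_N$ and rearranging supplies the bound at $N+1$. The subtlety is that a naive term-by-term application of $1+x \leq e^x$ only produces a product-form bound, so one must exploit the telescoping structure of the partial exponential sums rather than treating each term in isolation. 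Crucially, no $\Delta t$ restriction enters at any point, which is exactly what distinguishes this version from the classical discrete Gronwall lemma that requires $\Delta t\, \kappa_n < 1$.
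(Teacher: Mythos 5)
Your proof is correct. Note first that the paper does not actually prove this lemma; it only cites Lemma 5.1 of Heywood and Rannacher \cite{Heywood}, where the restricted version (requiring $\Delta t\,\kappa_n<1$) is proved and the unrestricted variant with the right-hand sum stopping at $N-1$ is obtained as a remark. Your self-contained double induction is the standard argument behind that remark, so in substance you are reconstructing the cited proof rather than taking a genuinely different route; what your write-up buys is that the lemma becomes independent of the reference. Two small points deserve attention. First, your induction tacitly requires the hypothesis inequality to hold with $N$ replaced by every index $k\le N$ (otherwise the inductive hypothesis cannot be applied to $A_k$); this is the intended reading of the lemma --- under the literal single-$N$ reading the conclusion is in fact false --- but it should be stated explicitly. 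Second, in the secondary induction the phrase ``multiplying both sides by $e^{\alpha_N}$ and rearranging'' is not quite the right mechanism, since multiplying the level-$N$ bound by $e^{\alpha_N}$ pushes the inequality the wrong way on the left; the clean step is to add $\alpha_N e^{s_N}$ to both sides of the bound at level $N$, where $s_N=\sum_{j=0}^{N-1}\alpha_j$, giving
\begin{equation*}
1+\sum_{k=0}^{N}\alpha_k e^{s_k}=\Big(1+\sum_{k=0}^{N-1}\alpha_k e^{s_k}\Big)+\alpha_N e^{s_N}\le e^{s_N}+\alpha_N e^{s_N}=(1+\alpha_N)e^{s_N}\le e^{s_N+\alpha_N},
\end{equation*}
which is exactly the estimate you need. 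With these clarifications the argument is complete, and it correctly isolates why no restriction on $\Delta t\,\kappa_n$ is needed once the $\kappa$-sum excludes the index $N$.
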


\section{The BDF2 modular grad-div stabilization algorithm and its stability}\label{Algorithm and Stability}
We propose the following fully-discrete modular grad-div stabilization algorithm for approximating solutions of ($\ref{NSE}$).
\quad \\
\textit{BDF2-mgd}:
\\
$\emph{Step 1}:$ Given $u^{n-1}_{h}, u^{n}_{h} \in X_{h}$,
find $(\hat{u}^{n+1}_{h}, p^{n+1}_{h}) \in (X_{h},Q_{h})$ satisfying:
\begin{eqnarray}
&(\frac{3\hat{u}^{n+1}_{h} - 4u^{n}_{h} + u^{n-1}_{h}}{2\Delta t},v_{h})
+ b(2u^{n}_{h}-u^{n-1}_{h},\hat{u}^{n+1}_{h},v_{h})
+ \nu (\nabla\hat{u}^{n+1}_{h}, \nabla v_{h}) \nonumber  \\
& - (p^{n+1}_{h},\nabla \cdot v_{h})
= (f^{n+1},v_{h}) \quad \forall v_{h} \in X_{h},   \label{step1.1}  \\
&(\nabla \cdot \hat{u}^{n+1}_{h}, q_{h})=0 \quad \forall q_{h} \in Q_{h}.  \label{step1.2}
\end{eqnarray}
$\emph{Step 2}:$ Given $\hat{u}^{n+1}_{h} \in X_{h}$, find $u^{n+1}_{h} \in X_{h}$ satisfying:
\begin{eqnarray}
&(\frac{3u^{n+1}_{h}-3\hat{u}^{n+1}_{h}}{2\Delta t},v_{h})
+ \beta (\nabla \cdot \frac{3u^{n+1}_{h} - 4u^{n}_{h}
+ u^{n-1}_{h}}{2\Delta t}, \nabla \cdot v_{h}) \nonumber \\
&+ \gamma (\nabla \cdot u^{n+1}_{h}, \nabla \cdot v_{h}) = 0 \quad \forall v_{h} \in X_{h}.  \label{step2}
\end{eqnarray}
\begin{remark}
When $\beta = 0$, Step 2 is equivalent to Step 2 appearing in \cite{Fiordilino} with $\gamma \leftarrow \frac{2}{3}\gamma$.
\end{remark}

Step 2 of \textit{BDF2-mgd} appears to be overdetermined since both the tangential and normal components of the solution are prescribed on the boundary.  However, due to the zeroth-order term, it is not; a unique solution always exists, Theorem \ref{existence}, and converges to the true NSE solution, Theorems \ref{error} and \ref{error0}.
\begin{theorem}\label{existence}
Suppose $f^{n+1}\in H^{-1}(\Omega)^{d}$ and $u^{n-1}_{h}, u^{n}_{h} \in X_{h}$.
Then, there exists unique solutions $\hat{u}^{n+1}_{h}, u^{n+1}_{h} \in X_{h}$ to \textit{BDF2-mgd}.
\end{theorem}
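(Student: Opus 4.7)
The plan is to treat the two steps separately, since Step 1 decouples completely from Step 2 (Step 2 only uses $\hat{u}^{n+1}_h$ as data). Because $X_h \times Q_h$ is finite-dimensional, in both cases existence is equivalent to uniqueness, so it suffices to show that the associated homogeneous problems have only the trivial solution; equivalently, to exhibit a coercive bilinear form.

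For Step 1, I would verify that the left-hand side of (\ref{step1.1})--(\ref{step1.2}) is a standard Oseen-type saddle-point system. First I would show uniqueness of $\hat{u}^{n+1}_h \in V_h$ by testing the homogeneous version of (\ref{step1.1}) with $v_h = \hat{u}^{n+1}_h$: the skew-symmetry of $b(\cdot,\cdot,\cdot)$ kills the trilinear term, the pressure term vanishes because $\hat{u}^{n+1}_h \in V_h$, and the remaining terms give
\begin{equation*}
\frac{3}{2\Delta t}\|\hat{u}^{n+1}_h\|^2 + \nu \|\nabla \hat{u}^{n+1}_h\|^2 = 0,
\end{equation*}
forcing $\hat{u}^{n+1}_h = 0$. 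The discrete inf-sup condition (\ref{inf-sup}) then yields $p^{n+1}_h = 0$, giving uniqueness and hence existence of $(\hat{u}^{n+1}_h, p^{n+1}_h)$ on the full pair $X_h \times Q_h$ by the standard finite-dimensional saddle-point argument.

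For Step 2, the essential observation is that, with $\hat{u}^{n+1}_h$, $u^n_h$, $u^{n-1}_h$ known, equation (\ref{step2}) is linear in $u^{n+1}_h$ and can be rewritten as $A(u^{n+1}_h, v_h) = F(v_h)$ for all $v_h \in X_h$, where
\begin{equation*}
A(u,v) := \frac{3}{2\Delta t}(u,v) + \Bigl(\frac{3\beta}{2\Delta t} + \gamma\Bigr)(\nabla\cdot u,\nabla\cdot v).
\end{equation*}
This bilinear form is symmetric and continuous on $X_h$, and the zeroth-order term makes it coercive since $A(u,u) \geq \tfrac{3}{2\Delta t}\|u\|^2$ uniformly in $\beta,\gamma \geq 0$. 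Thus Lax--Milgram (equivalently, invertibility of a symmetric positive-definite matrix) gives a unique $u^{n+1}_h \in X_h$.

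The only mildly subtle point—and the one I would expect the reader to pause on—is that Step 2 appears over-determined because $u^{n+1}_h$ is pinned to zero on $\partial \Omega$ while the grad-div penalty also tries to control $\nabla\cdot u^{n+1}_h$. The coercivity of $A$ resolves this: the mass term alone controls the $L^2$ norm of $u^{n+1}_h$, so no compatibility between the boundary data and the grad-div term is ever needed. Once this is pointed out, both halves of the theorem reduce to one-line applications of standard linear-algebra / Lax--Milgram results, so the writeup should be short.
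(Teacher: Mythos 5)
Your proof is correct and is essentially the argument the paper intends: the paper's own ``proof'' consists solely of the remark that the result follows by similar arguments as in Theorem 5 of \cite{Fiordilino}, and the argument you give is exactly the standard one that citation carries out --- finite dimensionality reduces existence to uniqueness, coercivity of the Oseen form on $V_h$ together with the inf-sup condition (\ref{inf-sup}) handles Step 1, and the symmetric positive-definite form $A$ (coercive through the zeroth-order term alone, uniformly in $\beta,\gamma\geq 0$) handles Step 2. Your closing observation that the mass term is what defuses the apparent over-determination of Step 2 matches the discussion the authors place immediately before the theorem statement.
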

\begin{proof}
The proof follows by similar arguments as in Theorem 5 of \cite{Fiordilino}.
\end{proof}

Next, we analyze the stability of \textit{BDF2-mgd}.
We first prove an important lemma for the stability analysis.  Unconditional, nonlinear, energy stability is then proven in Theorem $\ref{stability}$.
\begin{lemma}\label{stability_lemma}
Consider \textit{BDF2-mgd}, then the following identities hold for Step 2 (\ref{step2}):
\begin{eqnarray}\label{stability1}
\begin{aligned}
\|\hat{u}^{n+1}_{h}\|^{2}
=
&\|u^{n+1}_{h}\|^{2} + \|\hat{u}^{n+1}_{h} - u^{n+1}_{h}\|^{2}
+ \frac{4}{3}\gamma \Delta t \|\nabla \cdot u^{n+1}_{h}\|^{2}
\\
&+ \frac{\beta}{3} \Big(\|\nabla \cdot u^{n+1}_{h}\|^{2} - \|\nabla \cdot u^{n}_{h}\|^{2}
+ \|\nabla \cdot (2u^{n+1}_{h}-u^{n}_{h})\|^{2}
\\
&
- \|\nabla \cdot (2u^{n}_{h}-u^{n-1}_{h})\|^{2}
+\| \nabla \cdot (u^{n+1}_{h} - 2u^{n}_{h} + u^{n-1}_{h}) \|^{2} \Big),
\end{aligned}
\end{eqnarray}
and
\begin{eqnarray}\label{stability1.0}
\begin{aligned}
&(\frac{3u^{n+1}_{h} - 4u^{n}_{h} + u^{n-1}_{h}}{2\Delta t},\hat{u}^{n+1}_{h}-u^{n+1}_{h})
\\
&=
\frac{\beta}{6\Delta t}\|\nabla\cdot(3u^{n+1}_{h} - 4u^{n}_{h} + u^{n-1}_{h})\|^{2}
+
\frac{\gamma}{3}(\nabla\cdot u^{n+1}_{h},\nabla\cdot(3u^{n+1}_{h} - 4u^{n}_{h} + u^{n-1}_{h})).
\end{aligned}
\end{eqnarray}
\end{lemma}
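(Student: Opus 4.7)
The plan is to derive the two identities directly from the second step (\ref{step2}) by specific, carefully chosen test functions, and then use a pair of elementary algebraic identities: the polarization identity
\[
(a-b,a)=\tfrac{1}{2}\bigl(\|a\|^{2}-\|b\|^{2}+\|a-b\|^{2}\bigr),
\]
and the standard BDF2 (G-stability) identity
\[
2(3a-4b+c,a)=\|a\|^{2}-\|b\|^{2}+\|2a-b\|^{2}-\|2b-c\|^{2}+\|a-2b+c\|^{2}.
\]
The latter will be verified by expanding both sides and matching coefficients of $\|a\|^{2},\|b\|^{2},\|c\|^{2},(a,b),(a,c),(b,c)$; this is the only delicate bookkeeping in the lemma and is the obvious candidate for the ``main obstacle,'' though it is routine once written out.

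For identity (\ref{stability1}), I would set $v_{h}=u^{n+1}_{h}\in X_{h}$ in (\ref{step2}) and multiply through by $4\Delta t/3$. The first term becomes $(u^{n+1}_{h}-\hat{u}^{n+1}_{h},u^{n+1}_{h})$, to which the polarization identity gives
\[
\|u^{n+1}_{h}\|^{2}-\|\hat{u}^{n+1}_{h}\|^{2}+\|u^{n+1}_{h}-\hat{u}^{n+1}_{h}\|^{2}.
\]
The $\gamma$ term immediately yields $\tfrac{4}{3}\gamma\Delta t\,\|\nabla\cdot u^{n+1}_{h}\|^{2}$. For the $\beta$ term, applying the G-stability identity above with $a=\nabla\cdot u^{n+1}_{h},\,b=\nabla\cdot u^{n}_{h},\,c=\nabla\cdot u^{n-1}_{h}$ produces exactly the telescoping combination in the statement, multiplied by $\beta/3$. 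Rearranging to isolate $\|\hat{u}^{n+1}_{h}\|^{2}$ gives (\ref{stability1}).

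For identity (\ref{stability1.0}), the natural choice is $v_{h}=\dfrac{3u^{n+1}_{h}-4u^{n}_{h}+u^{n-1}_{h}}{2\Delta t}\in X_{h}$ in (\ref{step2}). Moving the first term to the right and writing $u^{n+1}_{h}-\hat{u}^{n+1}_{h}=-(\hat{u}^{n+1}_{h}-u^{n+1}_{h})$, the first term becomes (after multiplying by $2\Delta t/3$) the left-hand side of (\ref{stability1.0}). The $\beta$ term collapses to $\dfrac{\beta}{6\Delta t}\bigl\|\nabla\cdot(3u^{n+1}_{h}-4u^{n}_{h}+u^{n-1}_{h})\bigr\|^{2}$ after pulling the factor $\tfrac{2\Delta t}{3}\cdot\tfrac{1}{(2\Delta t)^{2}}$ out, and the $\gamma$ term becomes $\dfrac{\gamma}{3}(\nabla\cdot u^{n+1}_{h},\nabla\cdot(3u^{n+1}_{h}-4u^{n}_{h}+u^{n-1}_{h}))$. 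This yields (\ref{stability1.0}) with no further manipulation.

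The whole argument is therefore just ``test, polarize, invoke BDF2 G-stability, rearrange''; no estimates or inequalities are needed since these are identities. The one place that requires care is confirming the BDF2 G-stability expansion, which I would verify once and for all at the start of the proof.
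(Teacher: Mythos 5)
Your proposal is correct and follows essentially the same route as the paper: the paper tests (\ref{step2}) with $v_{h}=\frac{4\Delta t}{3}u^{n+1}_{h}$ (your $v_{h}=u^{n+1}_{h}$ followed by multiplication by $\frac{4\Delta t}{3}$) together with the polarization and BDF2 G-stability identities for (\ref{stability1}), and with $v_{h}=\frac{1}{3}(3u^{n+1}_{h}-4u^{n}_{h}+u^{n-1}_{h})$ (a scalar multiple of your choice) for (\ref{stability1.0}). Note only the harmless bookkeeping slip in your first computation: after scaling, the first term is $2(u^{n+1}_{h}-\hat{u}^{n+1}_{h},u^{n+1}_{h})$, not $(u^{n+1}_{h}-\hat{u}^{n+1}_{h},u^{n+1}_{h})$, which exactly cancels the factor $\tfrac{1}{2}$ you then dropped from the polarization identity, so the stated conclusion is unaffected.
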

\begin{proof}
Selecting $v_{h}=\frac{4\Delta t}{3}u^{n+1}_{h}$ in ($\ref{step2}$), we have
\begin{eqnarray}\label{pf1.1}
\begin{aligned}
&2\|u^{n+1}_{h}\|^{2} - 2(\hat{u}^{n+1}_{h},u^{n+1}_{h})
+ \frac{4}{3}\gamma \Delta t \|\nabla \cdot u^{n+1}_{h}\|^{2}
+ \frac{\beta}{3} \Big(\|\nabla \cdot u^{n+1}_{h}\|^{2}
- \|\nabla \cdot u^{n}_{h}\|^{2}
\\
&+ \|\nabla \cdot (2u^{n+1}_{h}-u^{n}_{h})\|^{2}
- \|\nabla \cdot (2u^{n}_{h}-u^{n-1}_{h})\|^{2}
+\| \nabla \cdot (u^{n+1}_{h} - 2u^{n}_{h} + u^{n-1}_{h}) \|^{2} \Big)
=0,
\end{aligned}
\end{eqnarray}
where we have used the identity $2(3a-4b+c)a=a^{2}-b^{2}+(2a-b)^{2}-(2b-c)^{2}+(a-2b+c)^{2}$ on the third term.
For the second term in (\ref{pf1.1}), using the following polarization identity
\begin{eqnarray}\label{pf1.2}
\begin{aligned}
(\hat{u}^{n+1}_{h},u^{n+1}_{h})
&=\frac{1}{2}\|\hat{u}^{n+1}_{h}\|^{2}
+\frac{1}{2}\|u^{n+1}_{h}\|^{2}
-\frac{1}{2}\|\hat{u}^{n+1}_{h}-u^{n+1}_{h}\|^{2}
\end{aligned}
\end{eqnarray}
yields the first identity (\ref{stability1}).
The second follows by setting $v_{h}=\frac{3u^{n+1}_{h} - 4u^{n}_{h} + u^{n-1}_{h}}{3}$ in ($\ref{step2}$).
\end{proof}

\noindent We are now in a position to prove unconditional stability.

\begin{theorem}\label{stability}
Suppose $f\in L^{2}(0,T;H^{-1}(\Omega)^{d})$, then the following holds for all $N\geq 1$.

\begin{eqnarray}\label{stability2}
\begin{aligned}
&\|u^{N}_{h}\|^{2}
+ \|2u^{N}_{h}-u^{N-1}_{h}\|^{2}
+ (\frac{2\gamma\Delta t}{3} + \beta)\|\nabla\cdot u^{N}_{h}\|^{2}
+ (\frac{2\gamma\Delta t}{3} + \beta)\|\nabla\cdot(2u^{N}_{h}-u^{N-1}_{h})\|^{2}
\\&\quad
+ 4\gamma \Delta t \sum^{N-1}_{n=1}\|\nabla \cdot u^{n+1}_{h}\|^{2}
+ 2\nu \Delta t\sum^{N-1}_{n=1}\|\nabla \hat{u}^{n+1}_{h}\|^{2}
\\
&
\leq
\frac{2\Delta t}{\nu}\sum^{N-1}_{n=1}\|f^{n+1}\|^{2}_{-1}
+ \|u^{1}_{h}\|^{2}
+ \|2u^{1}_{h}-u^{0}_{h}\|^{2}
\\
&\quad
+ (\frac{2\gamma\Delta t}{3} + \beta) \|\nabla \cdot u^{1}_{h}\|^{2}
+ (\frac{2\gamma\Delta t}{3} + \beta) \|\nabla \cdot (2u^{1}_{h}-u^{0}_{h})\|^{2}.
\end{aligned}
\end{eqnarray}
\end{theorem}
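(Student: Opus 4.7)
The strategy is to test Step 1 of \textit{BDF2-mgd} with $v_h = 2\Delta t\,\hat{u}^{n+1}_h$. The trilinear term vanishes by skew-symmetry of $b(\cdot,\cdot,\cdot)$, and the pressure pairing $(p^{n+1}_h, \nabla \cdot \hat{u}^{n+1}_h)$ vanishes by (\ref{step1.2}), leaving
\[
(3\hat{u}^{n+1}_h - 4u^n_h + u^{n-1}_h, \hat{u}^{n+1}_h) + 2\nu\Delta t\,\|\nabla \hat{u}^{n+1}_h\|^2 = 2\Delta t\,(f^{n+1}, \hat{u}^{n+1}_h).
\]
Multiplying through by $2$, I then apply the BDF2 $G$-stability polarization
\[
2(3a-4b+c,\,a) = \|a\|^2 - \|b\|^2 + \|2a-b\|^2 - \|2b-c\|^2 + \|a-2b+c\|^2
\]
with $(a,b,c) = (\hat{u}^{n+1}_h, u^n_h, u^{n-1}_h)$, producing an identity in which every norm involves $\hat{u}^{n+1}_h$ instead of $u^{n+1}_h$.

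The next step is to convert these norms back to $u^{n+1}_h$ via the decomposition $\hat{u}^{n+1}_h = u^{n+1}_h + (\hat{u}^{n+1}_h - u^{n+1}_h)$. Identity (\ref{stability1}) of Lemma \ref{stability_lemma} rewrites $\|\hat{u}^{n+1}_h\|^2$ directly, producing the $\frac{\beta}{3}$-weighted telescoping grad-div terms together with non-negative numerical-dissipation pieces. Expanding $\|2\hat{u}^{n+1}_h - u^n_h\|^2$ and $\|\hat{u}^{n+1}_h - 2u^n_h + u^{n-1}_h\|^2$ produces additional cross terms which, after simplification, reduce to $4(u^{n+1}_h, \hat{u}^{n+1}_h - u^{n+1}_h) + 2(3u^{n+1}_h - 4u^n_h + u^{n-1}_h, \hat{u}^{n+1}_h - u^{n+1}_h)$. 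The first pairing is evaluated by polarization combined with identity (\ref{stability1}), and the second by identity (\ref{stability1.0}); the residual $(\nabla \cdot u^{n+1}_h, \nabla \cdot (3u^{n+1}_h - 4u^n_h + u^{n-1}_h))$ is in turn expanded by the $G$-stability polarization applied to $\nabla \cdot u$. All $\beta$ and $\gamma$ contributions then condense into the desired telescope with combined coefficient $\frac{2\gamma\Delta t}{3} + \beta$ and a residual dissipation $4\gamma\Delta t\,\|\nabla \cdot u^{n+1}_h\|^2$.

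At this point the identity takes the form $E^{n+1}_h - E^n_h + (\text{non-negative terms}) + 4\nu\Delta t\,\|\nabla \hat{u}^{n+1}_h\|^2 + 4\gamma\Delta t\,\|\nabla \cdot u^{n+1}_h\|^2 = 4\Delta t\,(f^{n+1}, \hat{u}^{n+1}_h)$, where $E^n_h := \|u^n_h\|^2 + \|2u^n_h - u^{n-1}_h\|^2 + \bigl(\tfrac{2\gamma\Delta t}{3} + \beta\bigr)\bigl(\|\nabla \cdot u^n_h\|^2 + \|\nabla \cdot (2u^n_h - u^{n-1}_h)\|^2\bigr)$. Cauchy--Schwarz and Young's inequality then give $4\Delta t\,(f^{n+1}, \hat{u}^{n+1}_h) \leq \frac{2\Delta t}{\nu}\|f^{n+1}\|^2_{-1} + 2\nu\Delta t\,\|\nabla \hat{u}^{n+1}_h\|^2$; absorbing the kinetic half into the viscous dissipation leaves precisely $2\nu\Delta t\,\|\nabla \hat{u}^{n+1}_h\|^2$ on the LHS, matching the theorem. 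Summing from $n = 1$ to $N-1$ and discarding the remaining non-negative pieces (the accumulated $\|\hat{u}^{n+1}_h - u^{n+1}_h\|^2$, the $\|u^{n+1}_h - 2u^n_h + u^{n-1}_h\|^2$, and the $\beta$-weighted $\|\nabla \cdot (u^{n+1}_h - 2u^n_h + u^{n-1}_h)\|^2$ and $\|\nabla \cdot (3u^{n+1}_h - 4u^n_h + u^{n-1}_h)\|^2$ terms) yields (\ref{stability2}).

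The main obstacle I expect is the algebraic bookkeeping: verifying that the grad-div coefficient in the final telescope combines to exactly $\frac{2\gamma\Delta t}{3} + \beta$ requires tracking the $\frac{\beta}{3}$ contribution from identity (\ref{stability1}), the $\frac{2\beta}{3}$ contribution from the cross terms routed through (\ref{stability1.0}), and the $\frac{2\gamma\Delta t}{3}$ contribution generated by feeding the grad-div pairing back into the $G$-stability polarization on $\nabla \cdot u$. Because Lemma \ref{stability_lemma} was engineered precisely to be compatible with the BDF2 $G$-stable framework, these coefficients do line up; no discrete Gronwall is required, as the result is a pure energy identity modulo the forcing bound.
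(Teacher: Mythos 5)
Your proposal is correct and follows essentially the same route as the paper: test Step 1 with $\hat{u}^{n+1}_h$, eliminate the pressure via (\ref{step1.2}), apply the BDF2 $G$-stability identity, invoke both identities of Lemma \ref{stability_lemma} to trade $\hat{u}^{n+1}_h$-norms for $u^{n+1}_h$-norms plus the $\beta$- and $\gamma$-weighted telescopes, then sum and apply Cauchy--Schwarz--Young with the stated splitting. The only difference is cosmetic ordering of the algebra (you apply the $G$-stability identity to the $\hat{u}$-pairing first and convert afterwards, while the paper splits the pairing into $u^{n+1}_h$ and $\hat{u}^{n+1}_h-u^{n+1}_h$ parts before polarizing), and the coefficients you track do line up with the paper's intermediate identity.
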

\begin{proof}
Set $v_{h}=\hat{u}^{n+1}_{h}$ in ($\ref{step1.1}$) and
$q_{h}=p^{n+1}_{h}$ in ($\ref{step1.2}$).  Adding these two equations and rearranging the discrete time derivative yields
\begin{eqnarray}\label{pf2.1}
\begin{aligned}
&(\frac{3u^{n+1}_{h} - 4u^{n}_{h} + u^{n-1}_{h}}{2\Delta t},u^{n+1}_{h})
+(\frac{3u^{n+1}_{h} - 4u^{n}_{h} + u^{n-1}_{h}}{2\Delta t},\hat{u}^{n+1}_{h}-u^{n+1}_{h})
\\
& +(\frac{3\hat{u}^{n+1}_{h} - 3u^{n+1}_{h}}{2\Delta t},\hat{u}^{n+1}_{h})
+ \nu\|\nabla \hat{u}^{n+1}_{h}\|^{2}
=
(f^{n+1},\hat{u}^{n+1}_{h}).
\end{aligned}
\end{eqnarray}
Consider the resulting time derivative terms.  Use the identity $2(3a-4b+c)a=a^{2}-b^{2}+(2a-b)^{2}-(2b-c)^{2}+(a-2b+c)^{2}$ on the first term and both (\ref{stability1.0}) of Lemma \ref{stability_lemma} and the identity on the second term.  Apply the polarization identity to the third term.  Then,
\begin{eqnarray}\label{pf2.3}
\begin{aligned}
&\frac{1}{4\Delta t}
\Big(\|u^{n+1}_{h}\|^{2} - \|u^{n}_{h}\|^{2}
+ \|2u^{n+1}_{h}-u^{n}_{h}\|^{2} - \|2u^{n}_{h}-u^{n-1}_{h}\|^{2}
+ \|u^{n+1}_{h}-2u^{n}_{h}+u^{n-1}_{h}\|^{2}\Big)
\\&\quad
+ \frac{\gamma}{6}
\Big(\|\nabla\cdot u^{n+1}_{h}\|^{2} - \|\nabla\cdot u^{n}_{h}\|^{2}
+ \|\nabla\cdot(2u^{n+1}_{h}-u^{n}_{h})\|^{2} - \|\nabla\cdot(2u^{n}_{h}-u^{n-1}_{h})\|^{2}
\\&\quad
+ \|\nabla\cdot(u^{n+1}_{h}-2u^{n}_{h}+u^{n-1}_{h})\|^{2}\Big)
+ \frac{\beta}{6\Delta t}\|\nabla\cdot(3u^{n+1}_{h} - 4u^{n}_{h} + u^{n-1}_{h})\|^{2}
\\&\quad
+ \frac{3}{4\Delta t}
\Big(\|\hat{u}^{n+1}_{h}\|^{2} - \|u^{n+1}_{h}\|^{2}
+ \|\hat{u}^{n+1}_{h}-u^{n+1}_{h}\|^{2}\Big)
+ \nu\|\nabla \hat{u}^{n+1}_{h}\|^{2}
\\&
=
(f^{n+1},\hat{u}^{n+1}_{h}).
\end{aligned}
\end{eqnarray}
Multiply ($\ref{pf2.3}$) by $4\Delta t$ and use ($\ref{stability1}$) of Lemma \ref{stability_lemma}.  Then
\begin{eqnarray}\label{pf2.4}
\begin{aligned}
&\|u^{n+1}_{h}\|^{2} - \|u^{n}_{h}\|^{2}
+ \|2u^{n+1}_{h}-u^{n}_{h}\|^{2} - \|2u^{n}_{h}-u^{n-1}_{h}\|^{2}
+ \|u^{n+1}_{h}-2u^{n}_{h}+u^{n-1}_{h}\|^{2}
\\&\quad
+ \frac{2\gamma\Delta t}{3}
\Big(\|\nabla\cdot u^{n+1}_{h}\|^{2} - \|\nabla\cdot u^{n}_{h}\|^{2}
+ \|\nabla\cdot(2u^{n+1}_{h}-u^{n}_{h})\|^{2} - \|\nabla\cdot(2u^{n}_{h}-u^{n-1}_{h})\|^{2}
\\&\quad
+ \|\nabla\cdot(u^{n+1}_{h}-2u^{n}_{h}+u^{n-1}_{h})\|^{2}\Big)
+ \frac{2\beta}{3}\|\nabla\cdot(3u^{n+1}_{h} - 4u^{n}_{h} + u^{n-1}_{h})\|^{2}
\\&\quad
+ \beta \Big(\|\nabla \cdot u^{n+1}_{h}\|^{2} - \|\nabla \cdot u^{n}_{h}\|^{2}
+ \|\nabla \cdot (2u^{n+1}_{h}-u^{n}_{h})\|^{2}
- \|\nabla \cdot (2u^{n}_{h}-u^{n-1}_{h})\|^{2}
\\&\quad
+\| \nabla \cdot (u^{n+1}_{h} - 2u^{n}_{h} + u^{n-1}_{h}) \|^{2} \Big)
+ 6\|\hat{u}^{n+1}_{h} - u^{n+1}_{h}\|^{2}
\\&\quad
+ 4\gamma\Delta t\|\nabla \cdot u^{n+1}_{h}\|^{2}
+ 4\nu\Delta t\|\nabla \hat{u}^{n+1}_{h}\|^{2}
\\&
=
4\Delta t(f^{n+1},\hat{u}^{n+1}_{h}).
\end{aligned}
\end{eqnarray}
Summing ($\ref{pf2.4}$) from $n=1$ to $N-1$ yields
\begin{eqnarray}\label{pf2.5}
\begin{aligned}
&\|u^{N}_{h}\|^{2}
+ \|2u^{N}_{h}-u^{N-1}_{h}\|^{2}
+ \frac{2\gamma\Delta t}{3}\|\nabla\cdot u^{N}_{h}\|^{2}
+ \frac{2\gamma\Delta t}{3}\|\nabla\cdot(2u^{N}_{h}-u^{N-1}_{h})\|^{2}
+ \beta \|\nabla \cdot u^{N}_{h}\|^{2}
\\&\quad
+ \beta \|\nabla \cdot (2u^{N}_{h}-u^{N-1}_{h})\|^{2}
+ 4\gamma \Delta t \sum^{N-1}_{n=1}\|\nabla \cdot u^{n+1}_{h}\|^{2}
+ 4\nu \Delta t\sum^{N-1}_{n=1}\|\nabla \hat{u}^{n+1}_{h}\|^{2}
\\
&
\leq
4\Delta t\sum^{N-1}_{n=1}(f^{n+1},\hat{u}^{n+1}_{h})
+ \|u^{1}_{h}\|^{2}
+ \|2u^{1}_{h}-u^{0}_{h}\|^{2}
\\
&\quad
+ \frac{2\gamma\Delta t}{3}\|\nabla\cdot u^{1}_{h}\|^{2}
+ \frac{2\gamma\Delta t}{3}\|\nabla\cdot(2u^{1}_{h}-u^{0}_{h})\|^{2}
+ \beta \|\nabla \cdot u^{1}_{h}\|^{2}
+ \beta \|\nabla \cdot (2u^{1}_{h}-u^{0}_{h})\|^{2}.
\end{aligned}
\end{eqnarray}
Finally, using the Cauchy-Schwarz-Young inequality on the first term on the right hand side completes the proof.
\end{proof}

\begin{remark}
	Lemma \ref{stability_lemma} and Theorem \ref{stability} imply stability of $ \hat{u}_{h} $ with respect to $ |\|\cdot\||_{\infty, 0} $.
\end{remark}

\section{Error Analysis}\label{Error Analysis}
In this section, we provide $\acute{a}$ priori error estimates for \textit{BDF2-mgd}.  In particular, we show that \textit{BDF2-mgd} is second-order convergent.
Denote $u^{n}=u(t^{n})$ for $n=0,1,\cdots,N$ (and similarly for all other variables).
The errors are denoted by
\begin{eqnarray*}
\begin{aligned}
e_{u}^{n}=u^{n}-u^{n}_{h},\quad
e_{\hat{u}}^{n}=u^{n}-\hat{u}^{n}_{h},\quad
e_{p}^{n}=p^{n}-p^{n+1}_{h}.
\end{aligned}
\end{eqnarray*}
Decompose the velocity errors
\begin{eqnarray*}
\begin{aligned}
&e_{u}^{n}=\eta^{n} - \phi^{n}_{h},\quad
\eta^{n}:=u^{n} - \tilde{u}^{n},\quad
\phi^{n}_{h}:=u^{n}_{h} - \tilde{u}^{n},
\\
&e_{\hat{u}}^{n}=\eta^{n} - \psi^{n}_{h},\quad
\psi^{n}_{h}:=\hat{u}^{n}_{h} - \tilde{u}^{n},
\end{aligned}
\end{eqnarray*}
where $\tilde{u}^{n}$ denotes an interpolant of $u^{n}$ in $V_{h}$.
\begin{definition}
Define the following consistency errors.
For all $v_{h}\in V_{h}$,
\begin{eqnarray}\label{c-errors}
\begin{aligned}
\tau^{n+1}(v_{h}):=
(\frac{3u^{n+1} - 4u^{n} + u^{n-1}}{2\Delta t} - u_{t}^{n+1},v_{h})
- b(u^{n+1}-2u^{n}+u^{n-1},u^{n+1},v_{h}).
\end{aligned}
\end{eqnarray}
\end{definition}

\begin{lemma}\label{c-errors-bound}
Assume the true solution $u$ satisfies the following,
\begin{eqnarray}\label{regularity1}
\begin{aligned}
u\in L^{\infty}(0,T;H^{1}(\Omega)^{d}), \;
u_{tt}\in L^{2}(0,T;H^{1}(\Omega)^{d}), \;
u_{ttt}\in L^{2}(0,T;H^{-1}(\Omega)^{d}).
\end{aligned}
\end{eqnarray}
Then, $\forall \sigma>0$, we have
\begin{eqnarray}
\begin{aligned}
|\tau^{n+1}(v_{h})|
\leq
\frac{C}{2\sigma}\Delta t^{3}\Big(\int^{t^{n+1}}_{t^{n-1}}\|u_{ttt}\|_{-1}^{2}dt
+ \int^{t^{n+1}}_{t^{n-1}}\|\nabla u_{tt}\|^{2}dt\Big)
+ \sigma\|\nabla v_{h}\|^{2}.
\end{aligned}
\end{eqnarray}
\end{lemma}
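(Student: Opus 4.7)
The plan is to split $\tau^{n+1}(v_h)$ into its two natural summands and bound each using the $H^{-1}$--$H^1_0$ duality, the trilinear form estimate (\ref{tri1}), and Lemma \ref{lemma1}, then finish with a Young's inequality to produce the $\sigma\|\nabla v_h\|^2$ term on the right-hand side.

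Concretely, write
\begin{equation*}
\tau^{n+1}(v_h) = T_1 + T_2,
\end{equation*}
where $T_1 := \bigl(\tfrac{3u^{n+1}-4u^n+u^{n-1}}{2\Delta t}-u_t^{n+1},\,v_h\bigr)$ is the BDF2 temporal truncation and $T_2 := -b(u^{n+1}-2u^n+u^{n-1},u^{n+1},v_h)$ is the extrapolation error in the convective term. For $T_1$ I would pass to the $H^{-1}$--$H^1_0$ duality pairing, since $v_h \in V_h \subset H^1_0(\Omega)^d$, giving
\begin{equation*}
|T_1| \le \Bigl\|\tfrac{3u^{n+1}-4u^n+u^{n-1}}{2\Delta t}-u_t^{n+1}\Bigr\|_{-1}\,\|\nabla v_h\|,
\end{equation*}
and then apply (\ref{lemma1.3}) of Lemma \ref{lemma1} with $r=-1$ to obtain a bound by $C\Delta t^{3/2}\bigl(\int_{t^{n-1}}^{t^{n+1}}\|u_{ttt}\|_{-1}^2\,dt\bigr)^{1/2}\|\nabla v_h\|$. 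The hypothesis $u_{ttt}\in L^2(0,T;H^{-1})$ is exactly what is needed here.

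For $T_2$, I would invoke (\ref{tri1}) to get
\begin{equation*}
|T_2|\le C\|\nabla(u^{n+1}-2u^n+u^{n-1})\|\,\|\nabla u^{n+1}\|\,\|\nabla v_h\|,
\end{equation*}
use the regularity $u\in L^\infty(0,T;H^1)$ to absorb $\|\nabla u^{n+1}\|$ into the generic constant, and then apply (\ref{lemma1.1}) of Lemma \ref{lemma1} with $r=1$ to control $\|u^{n+1}-2u^n+u^{n-1}\|_1$ by $C\Delta t^{3/2}\bigl(\int_{t^{n-1}}^{t^{n+1}}\|u_{tt}\|_1^2\,dt\bigr)^{1/2}$. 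This uses $u_{tt}\in L^2(0,T;H^1)$.

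Finally, summing the two bounds gives an expression of the form $C\Delta t^{3/2}(A+B)^{1/2}\|\nabla v_h\|$, on which Young's inequality with parameter $\sigma$ yields the stated estimate. The only subtle point, rather than a real obstacle, is ensuring that the weakest possible norm is used on the truncation term so that only $H^{-1}$ regularity of $u_{ttt}$ is required; this is why $T_1$ is dualized against $\|\nabla v_h\|$ instead of being bounded with Cauchy--Schwarz in $L^2$. Once that is in place the argument is routine.
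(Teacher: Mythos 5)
Your proposal is correct and follows essentially the same route as the paper's proof: the same splitting into the BDF2 truncation term (bounded by $H^{-1}$--$H^{1}_{0}$ duality and (\ref{lemma1.3})) and the convective extrapolation term (bounded by (\ref{tri1}), the $L^{\infty}(0,T;H^{1})$ regularity, and (\ref{lemma1.1})), finished with Young's inequality to produce $\sigma\|\nabla v_{h}\|^{2}$. The only difference is cosmetic — the paper applies Cauchy--Schwarz--Young to each term separately rather than after summing.
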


\begin{proof}
For an arbitrary $\sigma>0$,
\begin{eqnarray}\label{pf3.1}
\begin{aligned}
&|\tau^{n+1}(v_{h})|
\\&\leq
\|\frac{3u^{n+1} - 4u^{n} + u^{n-1}}{2\Delta t} - u_{t}^{n+1}\|_{-1}\|\nabla v_{h}\|
+ C\|\nabla(u^{n+1}-2u^{n}+u^{n-1})\|\|\nabla u^{n+1}\|\|\nabla v_{h}\|
\\&\leq
\frac{1}{2\sigma}\|\frac{3u^{n+1} - 4u^{n} + u^{n-1}}{2\Delta t} - u_{t}^{n+1}\|_{-1}^{2}
\\&\quad
+ \frac{C}{2\sigma}\|\nabla u^{n+1}\|^{2}\|\nabla(u^{n+1}-2u^{n}+u^{n-1})\|^{2}
+ \sigma\|\nabla v_{h}\|^{2}
\\&\leq
\frac{C}{2\sigma}\Delta t^{3} \Big(\int^{t^{n+1}}_{t^{n-1}}\|u_{ttt}\|_{-1}^{2}dt
+ \int^{t^{n+1}}_{t^{n-1}}\|\nabla u_{tt}\|^{2}dt \Big)
+ \sigma\|\nabla v_{h}\|^{2},
\end{aligned}
\end{eqnarray}
where we use the Cauchy-Schwarz-Young inequality and Lemma $\ref{lemma1}$.
\end{proof}

Once again, we require a key lemma, regarding Step 2, to prove convergence.
\begin{lemma}
The following inequality holds.
\begin{eqnarray}\label{erroranalysis1}
\begin{aligned}
\|\psi^{n+1}_{h}\|^{2}
&\geq
\|\phi^{n+1}_{h}\|^{2} + \|\phi^{n+1}_{h}-\psi^{n+1}_{h}\|^{2}
+ \frac{\beta}{3} \Big( \|\nabla\cdot\phi^{n+1}_{h}\|^{2} - \|\nabla\cdot\phi^{n}_{h}\|^{2}
+ \|\nabla\cdot(2\phi^{n+1}_{h}-\phi^{n}_{h})\|^{2}
\\&\quad
- \|\nabla\cdot(2\phi^{n}_{h}-\phi^{n-1}_{h})\|^{2}
+ \frac{1}{2}\|\nabla\cdot(\phi^{n+1}_{h}-2\phi^{n}_{h}+\phi^{n-1}_{h})\|^{2} \Big)
+ \frac{2\gamma\Delta t}{3}\|\nabla\cdot\phi^{n+1}_{h}\|^{2}
\\&\quad
- \frac{C\beta d(1+2\Delta t)}{3}\int^{t^{n+1}}_{t^{n-1}}\|\nabla\eta_{t}\|^{2}dt
- \frac{\beta\Delta t}{3}\|\nabla\cdot(2\phi^{n}_{h}-\phi^{n-1}_{h})\|
- \frac{2\gamma d\Delta t}{3}\|\nabla\eta^{n+1}\|^{2}.
\end{aligned}
\end{eqnarray}
\end{lemma}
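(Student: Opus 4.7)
The plan is to derive the Step-2 error equation and mimic the identity-based manipulations of Lemma \ref{stability_lemma}, then absorb the $\eta$-dependent remainders using (weighted) Young's inequalities. First, I would insert the decompositions $u_h^k = \tilde u^k + \phi_h^k$ and $\hat u_h^{n+1} = \tilde u^{n+1} + \psi_h^{n+1}$ into (\ref{step2}) and use $\nabla\cdot u^k = 0$ (so that $\nabla\cdot\tilde u^k = -\nabla\cdot\eta^k$) to produce, for every $v_h \in X_h$, the error equation
\begin{eqnarray*}
&&\Big(\frac{3(\phi_h^{n+1} - \psi_h^{n+1})}{2\Delta t}, v_h\Big) + \beta\Big(\nabla\cdot\frac{3\phi_h^{n+1} - 4\phi_h^n + \phi_h^{n-1}}{2\Delta t}, \nabla\cdot v_h\Big) + \gamma(\nabla\cdot\phi_h^{n+1}, \nabla\cdot v_h)\\
&&\quad = \beta\Big(\nabla\cdot\frac{3\eta^{n+1} - 4\eta^n + \eta^{n-1}}{2\Delta t}, \nabla\cdot v_h\Big) + \gamma(\nabla\cdot\eta^{n+1}, \nabla\cdot v_h).
\end{eqnarray*}
I would then test with $v_h = \frac{4\Delta t}{3}\phi_h^{n+1}$, the analogue of the choice $v_h = \frac{4\Delta t}{3}u_h^{n+1}$ used in Lemma \ref{stability_lemma}.

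On the LHS, the polarization identity applied to $2(\psi_h^{n+1},\phi_h^{n+1})$ gives $\|\phi_h^{n+1}\|^2 - \|\psi_h^{n+1}\|^2 + \|\phi_h^{n+1} - \psi_h^{n+1}\|^2$; the algebraic identity $2(3a-4b+c)a = a^2 - b^2 + (2a - b)^2 - (2b - c)^2 + (a - 2b + c)^2$ applied to the $\beta$-term (with $\nabla\cdot\phi_h$ in the slots) produces the full telescoping divergence block with prefactor $\frac{\beta}{3}$; and the $\gamma$-term contributes $\frac{4\gamma\Delta t}{3}\|\nabla\cdot\phi_h^{n+1}\|^2$. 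Moving $\|\psi_h^{n+1}\|^2$ to the other side already reproduces the list of positive quantities appearing in (\ref{erroranalysis1}).

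The crux is the two $\eta$-dependent right-hand side terms. For the $\beta$ remainder I would split $\nabla\cdot\phi_h^{n+1} = \nabla\cdot(\phi_h^{n+1} - 2\phi_h^n + \phi_h^{n-1}) + \nabla\cdot(2\phi_h^n - \phi_h^{n-1})$ and treat the pieces separately. The first piece is paired via Young's inequality with weight $\epsilon = \tfrac{1}{2}$ so that $\frac{\beta}{6}\|\nabla\cdot(\phi_h^{n+1} - 2\phi_h^n + \phi_h^{n-1})\|^2$ is absorbed into the LHS block (which is why the coefficient $\tfrac{1}{2}$ is visible in the statement); the second piece is paired via a $\Delta t$-weighted Young's so that the $\phi$-side residual is $\frac{\beta\Delta t}{3}\|\nabla\cdot(2\phi_h^n - \phi_h^{n-1})\|^2$ while the $\eta$-side coefficient is $\frac{C\beta}{3\Delta t}$. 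Using $\|\nabla\cdot w\|^2 \leq d\,\|\nabla w\|^2$ together with Lemma \ref{lemma1}, (\ref{lemma1.2}), this yields the composite factor $\frac{C\beta d(1+2\Delta t)}{3}\int_{t^{n-1}}^{t^{n+1}}\|\nabla\eta_t\|^2\,dt$. The $\gamma$ remainder is handled by a standard Cauchy-Schwarz-Young, absorbing $\frac{2\gamma\Delta t}{3}\|\nabla\cdot\phi_h^{n+1}\|^2$ into the LHS and producing the $\frac{2\gamma d\Delta t}{3}\|\nabla\eta^{n+1}\|^2$ term on the right.

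The main obstacle is choosing the Young's weights so that simultaneously (i) a positive $\frac{\beta}{6}\|\nabla\cdot(\phi_h^{n+1} - 2\phi_h^n + \phi_h^{n-1})\|^2$ survives on the LHS, and (ii) the nonstandard $\Delta t$-weighted split generates a $\|\nabla\cdot(2\phi_h^n - \phi_h^{n-1})\|^2$ remainder whose coefficient carries a factor $\Delta t$ while the paired $\eta$ coefficient remains $O(1)$ in $\Delta t$; without this calibration the $\Delta t$ in Lemma \ref{lemma1}, (\ref{lemma1.2}) would be lost and later closure of the global Gronwall step would degrade by a full order. Everything else is a direct transcription of the identity bookkeeping already carried out in Lemma \ref{stability_lemma}.
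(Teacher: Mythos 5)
Your proposal is correct and follows essentially the same route as the paper: the same Step-2 error equation (the paper obtains it by subtracting the trivially satisfied continuous identity rather than substituting the decomposition, which is the same computation), the same test function $\phi_h^{n+1}$ up to the $\tfrac{4\Delta t}{3}$ scaling, the same algebraic and polarization identities, the same split of the $\beta$-remainder into the second-difference and $2\phi_h^n-\phi_h^{n-1}$ pieces, and the same Young calibrations (including the $\Delta t$-weighted one that pairs with Lemma \ref{lemma1}, (\ref{lemma1.2}), and the $\epsilon=\tfrac12$ choice that leaves the $\tfrac{\beta}{6}$ second-difference term on the left). No gaps.
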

\begin{proof}
At time $t^{n+1}$, for all $v_{h} \in X_{h}$, the true solution $u$ satisfies
\begin{eqnarray}\label{pf4.1}
\begin{aligned}
&(\frac{3u^{n+1}-3u^{n+1}}{2\Delta t},v_{h})
+ \beta (\nabla \cdot \frac{3u^{n+1} - 4u^{n}
+ u^{n-1}}{2\Delta t}, \nabla \cdot v_{h})
+ \gamma (\nabla \cdot u^{n+1}, \nabla \cdot v_{h}) = 0.
\end{aligned}
\end{eqnarray}
Subtracting ($\ref{pf4.1}$) from ($\ref{step2}$), we have
\begin{eqnarray}\label{pf4.2}
\begin{aligned}
&(\frac{3e_{u}^{n+1} - 3e_{\hat{u}}^{n+1}}{2\Delta t},v_{h})
+ \beta (\nabla \cdot \frac{3e_{u}^{n+1} - 4e_{u}^{n}
+ e_{u}^{n-1}}{2\Delta t}, \nabla \cdot v_{h})
+ \gamma (\nabla \cdot e_{u}^{n+1}, \nabla \cdot v_{h}) = 0.
\end{aligned}
\end{eqnarray}
Setting $v_{h}=\phi^{n+1}_{h}$ in ($\ref{pf4.2}$), using similar identities as in Theorem \ref{stability_lemma}, and rearranging terms yields
\begin{eqnarray}\label{pf4.3}
\begin{aligned}
\|\psi^{n+1}_{h}\|^{2}
&=
\|\phi^{n+1}_{h}\|^{2} + \|\phi^{n+1}_{h}-\psi^{n+1}_{h}\|^{2}
+ \frac{\beta}{3} \Big( \|\nabla\cdot\phi^{n+1}_{h}\|^{2} - \|\nabla\cdot\phi^{n}_{h}\|^{2}
+ \|\nabla\cdot(2\phi^{n+1}_{h}-\phi^{n}_{h})\|^{2}
\\&\quad
- \|\nabla\cdot(2\phi^{n}_{h}-\phi^{n-1}_{h})\|^{2}
+ \|\nabla\cdot(\phi^{n+1}_{h}-2\phi^{n}_{h}+\phi^{n-1}_{h})\|^{2} \Big)
+ \frac{4\gamma\Delta t}{3}\|\nabla\cdot\phi^{n+1}_{h}\|^{2}
\\&\quad
- \frac{2\beta}{3}(\nabla\cdot(3\eta^{n+1}-4\eta^{n}+\eta^{n-1}),\nabla\cdot\phi^{n+1}_{h})
- \frac{4\gamma\Delta t}{3}(\nabla\cdot\eta^{n+1},\nabla\cdot\phi^{n+1}_{h}).
\end{aligned}
\end{eqnarray}
Split $-\frac{2\beta}{3}(\nabla\cdot(3\eta^{n+1}-4\eta^{n}+\eta^{n-1}),\nabla\cdot\phi^{n+1}_{h})$ into $ - \frac{2\beta}{3}(\nabla\cdot(3\eta^{n+1}-4\eta^{n}+\eta^{n-1}),\nabla \cdot (\phi^{n+1}_{h}-2\phi^{n}_{h}+\phi^{n-1}_{h})) -\frac{2\beta}{3}(\nabla\cdot(3\eta^{n+1}-4\eta^{n}+\eta^{n-1}),\nabla \cdot (2\phi^{n}_{h}-\phi^{n-1}_{h}))$. Using the Cauchy-Schwarz-Young inequality and Lemma $\ref{lemma1}$.  Then, the following three inequalities hold,
\begin{eqnarray}\label{pf4.4}
\begin{aligned}
&|\frac{2\beta}{3}(\nabla\cdot(3\eta^{n+1}-4\eta^{n}+\eta^{n-1}),\nabla\cdot(2\phi^{n}_{h}-\phi^{n-1}_{h}))| 
\\&\leq
\frac{2\beta\sqrt{d}}{3}\|\nabla(3\eta^{n+1}-4\eta^{n}+\eta^{n-1})\|\|\nabla\cdot(2\phi^{n}_{h}-\phi^{n-1}_{h})\|
\\&\leq
\frac{C\beta d}{3}\int^{t^{n+1}}_{t^{n-1}}\|\nabla\eta_{t}\|^{2}dt
+ \frac{\beta\Delta t}{3}\|\nabla\cdot(2\phi^{n}_{h}-\phi^{n-1}_{h})\|^{2}
\\&\leq
\frac{C\beta d}{3}\int^{t^{n+1}}_{t^{n-1}}\|\nabla\eta_{t}\|^{2}dt
+ \frac{\beta\Delta t}{3}\big(\|\nabla\cdot(2\phi^{n}_{h}-\phi^{n-1}_{h})\|^{2} + \|\nabla \cdot \phi^{n}_{h}\|^{2}\big),
\end{aligned}
\end{eqnarray}
\begin{eqnarray}\label{pf4.5}
\begin{aligned}
&|\frac{2\beta}{3}(\nabla\cdot(3\eta^{n+1}-4\eta^{n}+\eta^{n-1}),\nabla\cdot(\phi^{n+1}_{h}-2\phi^{n}_{h}+\phi^{n-1}_{h}))| 
\\&\leq
\frac{2\beta\sqrt{d}}{3}\|\nabla(3\eta^{n+1}-4\eta^{n}+\eta^{n-1})\|\|\nabla\cdot(\phi^{n+1}_{h}-2\phi^{n}_{h}+\phi^{n-1}_{h})\|
\\&\leq
\frac{2C\beta d\Delta t}{3}\int^{t^{n+1}}_{t^{n-1}}\|\nabla\eta_{t}\|^{2}dt
+ \frac{\beta}{6}\|\nabla\cdot(\phi^{n+1}_{h}-2\phi^{n}_{h}+\phi^{n-1}_{h})\|^{2},
\end{aligned}
\end{eqnarray}
and
\begin{eqnarray}\label{pf4.6}
\begin{aligned}
|\frac{4\gamma\Delta t}{3}(\nabla\cdot\eta^{n+1},\nabla\cdot\phi^{n+1}_{h})|
&\leq
\frac{4\gamma\sqrt{d}\Delta t}{3}\|\nabla\eta^{n+1}\|\|\nabla\cdot\phi^{n+1}_{h}\|
\\&\leq
\frac{2\gamma d\Delta t}{3}\|\nabla\eta^{n+1}\|^{2}
+ \frac{2\gamma\Delta t}{3}\|\nabla\cdot\phi^{n+1}_{h}\|.
\end{aligned}
\end{eqnarray}
Combining ($\ref{pf4.3}$) - ($\ref{pf4.6}$) completes the proof.
\end{proof}

Next, we give the main error result for \textit{BDF2-mgd} when $ \beta>0 $.
\begin{theorem}\label{error}
Assume the true solution $u, p$ satisfy ($\ref{regularity1}$) and the following regularity
\begin{eqnarray}\label{regularity2}
\begin{aligned}
&u\in L^{\infty}(0,T;H^{k+1}(\Omega)^{d})\cap L^{2}(0,T;H^{k+1}(\Omega)^{d}), \\
&u_{t}\in L^{2}(0,T;H^{k+1}(\Omega)^{d}), \quad
p\in L^{2}(0,T;H^{m+1}(\Omega)).
\end{aligned}
\end{eqnarray}
Then, we have the following estimates for \textit{BDF2-mgd}.
\begin{eqnarray}\label{erroranalysis2}
\begin{aligned}
&\|e_{u}^{N}\|^{2} + \|2e_{u}^{N}-e_{u}^{N-1}\|^{2}
+ (\frac{2\gamma\Delta t}{3} + \beta)
\Big( \|\nabla\cdot e_{u}^{N}\|^{2} + \|\nabla\cdot(2e_{u}^{N}-e_{u}^{N-1})\|^{2} \Big)
\\&\quad
+ 2\nu \Delta t \sum^{N-1}_{n=1}\|\nabla e_{\hat{u}}^{n+1}\|^{2}
+ 2\gamma\Delta t \sum^{N-1}_{n=1}\|\nabla\cdot e_{u}^{n+1}\|^{2}
\\&\leq
C\exp(C^{*}T)\bigg\{\inf\limits_{v_{h}\in X_{h}}
\Big(  \beta (1+\Delta t) \|\nabla(u - v_{h})_{t}\|_{2,0}^{2}
+ \frac{1}{\nu}\|(u - v_{h})_{t}\|_{2,0}^{2}
\\&\quad
+ (\frac{\gamma^{2}\Delta t}{\beta} + \gamma + \nu + \frac{1}{\nu}) |\|\nabla(u - v_{h})\||_{2,0}^{2}
+ (\frac{2\gamma\Delta t}{3} + \beta + \frac{1}{\nu^{2}}) \|\nabla(u - v_{h})\|_{\infty,0}^{2} 
\\&\quad
+ \|u - v_{h}\|_{\infty,0}^{2}  \Big)
+ \frac{1}{\nu} \inf\limits_{q_{h}\in Q_{h}} |\|p - q_{h}\||_{2,0}^{2}
+ \frac{1}{\nu}\Delta t^{4}
\\&\quad
+ \|e_{u}^{1}\|^{2} + \|2e_{u}^{1}-e_{u}^{0}\|^{2}
+ (\frac{2\gamma\Delta t}{3} + \beta)
\Big( \|\nabla\cdot e_{u}^{1}\|^{2}
+ \|\nabla\cdot(2e_{u}^{1}-e_{u}^{0})\|^{2} \Big)\bigg\}.
\end{aligned}
\end{eqnarray}
\end{theorem}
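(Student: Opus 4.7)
The plan is to obtain the bound in three movements: derive the Step 1 error equation, splice in Lemma (\ref{erroranalysis1}) to control the jump from $\hat u_h$ to $u_h$, and finish with discrete Gronwall after telescoping.

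First I would evaluate the weak NSE at $t^{n+1}$, test against $v_h \in V_h$, and subtract Step 1 (\ref{step1.1}) with $q_h \in Q_h$ a suitable pressure interpolant (whose contribution enters as $(p^{n+1}-q_h,\nabla\cdot v_h)$). Using the decomposition $e_u^n = \eta^n - \phi^n_h$, $e_{\hat u}^n = \eta^n - \psi^n_h$, and the consistency functional $\tau^{n+1}(v_h)$ from (\ref{c-errors}), the result is a linear identity in $\psi^{n+1}_h,\phi^n_h,\phi^{n-1}_h$ whose right-hand side contains only interpolation pieces (involving $\eta$, $\eta_t$, $p-q_h$), the consistency error $\tau^{n+1}$, and a trilinear difference. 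I would split the nonlinear difference in the standard three ways: an extrapolation error absorbed into $\tau^{n+1}$, a convecting-field error $b(2e_u^n - e_u^{n-1},u^{n+1},\psi^{n+1}_h)$, and a transported-field error $b(2u^n_h-u^{n-1}_h,\eta^{n+1}-\psi^{n+1}_h,\psi^{n+1}_h)$, and bound each via (\ref{tri1})--(\ref{tri3}) together with Young's inequality, distributing the $\eta$-pieces onto the right-hand side with factor $\nu^{-1}$ and leaving $\|\phi^n_h\|^2$-type terms with bounded multipliers for the later Gronwall step.

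Next I would test with $v_h = \psi^{n+1}_h$ and exploit BDF2 polarization. The key trick, because $\hat u_h$ and $u_h$ are mismatched in the discrete time derivative of Step 1, is to rewrite $3\psi^{n+1}_h - 4\phi^n_h + \phi^{n-1}_h = (3\phi^{n+1}_h - 4\phi^n_h + \phi^{n-1}_h) + 3(\psi^{n+1}_h - \phi^{n+1}_h)$. The first piece telescopes in $\|\phi_h\|$ via the identity $2(3a-4b+c)a = a^2 - b^2 + (2a-b)^2 - (2b-c)^2 + (a-2b+c)^2$ used in Lemma \ref{stability_lemma}, and the second produces $(3/4\Delta t)(\|\psi^{n+1}_h\|^2 - \|\phi^{n+1}_h\|^2 + \|\psi^{n+1}_h-\phi^{n+1}_h\|^2)$ by polarization. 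To convert the bare $\|\psi^{n+1}_h\|^2$ into the quantity containing divergence dissipation with coefficient $(\tfrac{2\gamma\Delta t}{3}+\beta)$ promised by (\ref{erroranalysis2}), I would apply the lower bound (\ref{erroranalysis1}), which supplies the required $\beta/3$-telescope in $\|\nabla\cdot\phi_h\|^2$, the $(2\gamma\Delta t/3)\|\nabla\cdot\phi^{n+1}_h\|^2$ sink, and manageable right-hand remainders in $\int\|\nabla\eta_t\|^2\,dt$ and $\|\nabla\eta^{n+1}\|^2$.

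To close, I would multiply by $4\Delta t$ and sum from $n=1$ to $N-1$. Cauchy-Schwarz-Young is used throughout to absorb $\nu\|\nabla\psi^{n+1}_h\|^2$ and $\gamma\Delta t\|\nabla\cdot\phi^{n+1}_h\|^2$ contributions into the left-hand dissipation; the $\gamma^2\Delta t/\beta$ coefficient in the stated bound arises when absorbing a term of the form $(\gamma\Delta t)(\nabla\cdot\eta^{n+1},\nabla\cdot\phi^{n+1}_h)$ against the $\beta$-dissipation rather than the $\gamma$-dissipation (whichever is sharper when $\beta>0$), while Lemma \ref{c-errors-bound} supplies the $\Delta t^4$ consistency contribution. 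Applying the discrete Gronwall Lemma \ref{gronwall2} to the $\|\phi^{n+1}_h\|^2$-terms created by the trilinear bounds yields the $\exp(C^*T)$ factor. Finally, triangle inequality on $e_u = \eta - \phi_h$ and $e_{\hat u} = \eta - \psi_h$, together with Lemma \ref{lemma0} to lift the $V_h$-infimum to an $X_h$-infimum, delivers (\ref{erroranalysis2}). The main obstacle is the joint bookkeeping of the two grad-div parameters and the $\hat u$-vs-$u$ mismatch in Step 1: applying the BDF2 identity directly to $\psi^{n+1}_h$ in the time-derivative slot does not telescope, and the compensation is only visible after (\ref{erroranalysis1}) feeds in the divergence control from Step 2; tracking which terms are best absorbed by $\beta$ versus by $\gamma\Delta t$ is what produces the precise coefficient structure on the right-hand side.
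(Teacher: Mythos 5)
Your overall architecture matches the paper's proof: the Step 1 error equation tested with $\psi^{n+1}_h\in V_h$, the splitting $3\psi^{n+1}_h-4\phi^{n}_h+\phi^{n-1}_h=(3\phi^{n+1}_h-4\phi^{n}_h+\phi^{n-1}_h)+3(\psi^{n+1}_h-\phi^{n+1}_h)$, the lower bound (\ref{erroranalysis1}) to trade $\|\psi^{n+1}_h\|^{2}$ for $\|\phi^{n+1}_h\|^{2}$ plus the $(\frac{2\gamma\Delta t}{3}+\beta)$ divergence telescope, the same three-way splitting of the nonlinearity, absorption against the $\beta$-dissipation to produce $\gamma^{2}\Delta t/\beta$, Gronwall, Lemma \ref{lemma0}, and the triangle inequality.

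There is, however, one step that does not work as written. You assert that the first piece $(\frac{3\phi^{n+1}_h-4\phi^{n}_h+\phi^{n-1}_h}{2\Delta t},\psi^{n+1}_h)$ ``telescopes in $\|\phi_h\|$'' via the identity $2(3a-4b+c)a=a^{2}-b^{2}+(2a-b)^{2}-(2b-c)^{2}+(a-2b+c)^{2}$; but that identity requires the test function to be $\phi^{n+1}_h$, not $\psi^{n+1}_h$. Writing $\psi^{n+1}_h=\phi^{n+1}_h+(\psi^{n+1}_h-\phi^{n+1}_h)$ leaves the cross term $(\frac{3\phi^{n+1}_h-4\phi^{n}_h+\phi^{n-1}_h}{2\Delta t},\psi^{n+1}_h-\phi^{n+1}_h)$, which carries a factor $\Delta t^{-1}$ and cannot be absorbed by Cauchy--Schwarz: bounding it by $\|3\phi^{n+1}_h-4\phi^{n}_h+\phi^{n-1}_h\|^{2}$ plus $\|\psi^{n+1}_h-\phi^{n+1}_h\|^{2}$ either consumes the entire $6\|\psi^{n+1}_h-\phi^{n+1}_h\|^{2}$ on the left or leaves a sum $\sum_{n}\|\phi^{n}_h\|^{2}$ with no $\Delta t$ in front, so Gronwall degenerates. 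The lemma you invoke, (\ref{erroranalysis1}), does not touch this term: it is the error analogue of (\ref{stability1}) and only converts the bare $\|\psi^{n+1}_h\|^{2}$ produced by polarizing the second piece. What is needed is the error analogue of the \emph{second} identity (\ref{stability1.0}) of Lemma \ref{stability_lemma}: test the Step 2 error equation (\ref{pf4.2}) with $v_h=\frac{1}{3}(3\phi^{n+1}_h-4\phi^{n}_h+\phi^{n-1}_h)$, which converts the cross term into the nonnegative quantity $\frac{\beta}{6\Delta t}\|\nabla\cdot(3\phi^{n+1}_h-4\phi^{n}_h+\phi^{n-1}_h)\|^{2}$ plus a $\frac{\gamma}{3}(\nabla\cdot(3\phi^{n+1}_h-4\phi^{n}_h+\phi^{n-1}_h),\nabla\cdot\phi^{n+1}_h)$ contribution and interpolation remainders in $\eta$; this is (\ref{pf5.6}) in the paper. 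That $\beta$-term is moreover exactly the dissipation later used to absorb $\frac{4\gamma\Delta t}{3}(\nabla\cdot(3\phi^{n+1}_h-4\phi^{n}_h+\phi^{n-1}_h),\nabla\cdot\eta^{n+1})$ and generate the $\gamma^{2}\Delta t/\beta$ coefficient, so both halves of Lemma \ref{stability_lemma} are indispensable. With this insertion the remainder of your plan goes through as in the paper.
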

\begin{proof}
At time $t^{n+1}$, the true solution $u, p$ satisfies
\begin{eqnarray}
&(\frac{3u^{n+1} - 4u^{n} + u^{n-1}}{2\Delta t},v_{h})
+ b(2u^{n}-u^{n-1},u^{n+1},v_{h})
+ \nu (\nabla u^{n+1}, \nabla v_{h}) \nonumber  \\
& - (p^{n+1},\nabla \cdot v_{h})
= (f^{n+1},v_{h}) + \tau^{n+1}(v_{h})  \quad \forall v_{h} \in X_{h},   \label{pf5.1}  \\
&(\nabla \cdot u^{n+1}, q_{h})=0  \quad \forall q_{h} \in Q_{h}.  \label{pf5.2}
\end{eqnarray}
Subtracting ($\ref{step1.1}$) and ($\ref{step1.2}$) from ($\ref{pf5.1}$) and ($\ref{pf5.2}$), respectively, we have
\begin{eqnarray}
&(\frac{3e_{\hat{u}}^{n+1} - 4e_{u}^{n} + e_{u}^{n-1}}{2\Delta t},v_{h})
+ b(2u^{n}-u^{n-1},u^{n+1},v_{h})
- b(2u^{n}_{h}-u^{n-1}_{h},\hat{u}^{n+1}_{h},v_{h}) \nonumber  \\
& + \nu (\nabla e_{\hat{u}}^{n+1}, \nabla v_{h})
- (e_{p}^{n+1},\nabla \cdot v_{h})
= \tau^{n+1}(v_{h})  \quad \forall v_{h} \in X_{h},   \label{pf5.3}  \\
&(\nabla \cdot e_{\hat{u}}^{n+1}, q_{h})=0  \quad \forall q_{h} \in Q_{h}.  \label{pf5.4}
\end{eqnarray}
Set $v_{h}=\psi^{n+1}_{h} \in V_{h}$ in equation ($\ref{pf5.3}$), then
\begin{eqnarray}\label{pf5.5}
\begin{aligned}
&(\frac{3\eta^{n+1} - 4\eta^{n} + \eta^{n-1}}{2\Delta t},\psi^{n+1}_{h})
- (\frac{3\phi^{n+1}_{h} - 4\phi^{n}_{h} + \phi^{n-1}_{h}}{2\Delta t},\phi^{n+1}_{h})
\\&
- (\frac{3\phi^{n+1}_{h} - 4\phi^{n}_{h} + \phi^{n-1}_{h}}{2\Delta t},\psi^{n+1}_{h}-\phi^{n+1}_{h})
- (\frac{3\psi^{n+1}_{h} - 3\phi^{n+1}_{h}}{2\Delta t},\psi^{n+1}_{h})
\\&
+ b(2u^{n}-u^{n-1},u^{n+1},\psi^{n+1}_{h})
- b(2u^{n}_{h}-u^{n-1}_{h},\hat{u}^{n+1}_{h},\psi^{n+1}_{h})
\\&
+ \nu (\nabla\eta^{n+1}, \nabla\psi^{n+1}_{h})
- \nu \|\nabla\psi^{n+1}_{h}\|^{2}
- (p^{n+1} - q_{h},\nabla \cdot \psi^{n+1}_{h})
= \tau^{n+1}(\psi^{n+1}_{h}).
\end{aligned}
\end{eqnarray}
Here, $q_{h}\in Q_{h}$ is arbitrary.
Furthermore, setting $v_{h}=\frac{3\phi_{h}^{n+1} - 4\phi_{h}^{n} + \phi_{h}^{n-1}}{3} \in V_{h}$ in ($\ref{pf4.2}$) and rearranging terms yields
\begin{eqnarray}\label{pf5.6}
\begin{aligned}
&(\frac{3\phi_{h}^{n+1} - 4\phi_{h}^{n} + \phi_{h}^{n-1}}{2\Delta t},\psi^{n+1}_{h}-\phi^{n+1}_{h})
\\&
= \frac{\gamma}{3}(\nabla\cdot(3\phi_{h}^{n+1} - 4\phi_{h}^{n} + \phi_{h}^{n-1}),\nabla\cdot\phi^{n+1}_{h})
+ \frac{\beta}{6\Delta t}\|\nabla\cdot(3\phi_{h}^{n+1} - 4\phi_{h}^{n} + \phi_{h}^{n-1})\|^{2}
\\&\quad
- \frac{\gamma}{3}(\nabla\cdot(3\phi_{h}^{n+1} - 4\phi_{h}^{n} + \phi_{h}^{n-1}),\nabla\cdot\eta^{n+1})
\\&\quad
- \frac{\beta}{6\Delta t}(\nabla\cdot(3\eta^{n+1} - 4\eta^{n} + \eta^{n-1}),\nabla\cdot(3\phi_{h}^{n+1} - 4\phi_{h}^{n} + \phi_{h}^{n-1})).
\end{aligned}
\end{eqnarray}
Combine ($\ref{pf5.5}$) and ($\ref{pf5.6}$) and rearrange.  Then,
\begin{eqnarray}\label{pf5.7}
\begin{aligned}
&(\frac{3\phi^{n+1}_{h} - 4\phi^{n}_{h} + \phi^{n-1}_{h}}{2\Delta t},\phi^{n+1}_{h})
+ \frac{\gamma}{3}(\nabla\cdot(3\phi_{h}^{n+1} - 4\phi_{h}^{n} + \phi_{h}^{n-1}),\nabla\cdot\phi^{n+1})
\\&\quad
+ \frac{\beta}{6\Delta t}\|\nabla\cdot(3\phi_{h}^{n+1} - 4\phi_{h}^{n} + \phi_{h}^{n-1})\|^{2}
+ \nu \|\nabla\psi^{n+1}_{h}\|^{2}
\\&\quad
+ \frac{3}{4\Delta t}(\|\psi^{n+1}_{h}\|^{2} - \|\phi^{n+1}_{h}\|^{2} + \|\psi^{n+1}_{h}-\phi^{n+1}_{h}\|^{2})
\\&=
(\frac{3\eta^{n+1} - 4\eta^{n} + \eta^{n-1}}{2\Delta t},\psi^{n+1}_{h})
+ \frac{\gamma}{3}(\nabla\cdot(3\phi_{h}^{n+1} - 4\phi_{h}^{n} + \phi_{h}^{n-1}),\nabla\cdot\eta^{n+1})
\\&\quad
+ \frac{\beta}{6\Delta t}(\nabla\cdot(3\eta^{n+1} - 4\eta^{n} + \eta^{n-1}),\nabla\cdot(3\phi_{h}^{n+1} - 4\phi_{h}^{n} + \phi_{h}^{n-1}))
\\&\quad
+ b(2u^{n}-u^{n-1},u^{n+1},\psi^{n+1}_{h})
- b(2u^{n}_{h}-u^{n-1}_{h},\hat{u}^{n+1}_{h},\psi^{n+1}_{h})
\\&\quad
+ \nu (\nabla\eta^{n+1}, \nabla\psi^{n+1}_{h})
- (p^{n+1} - q_{h},\nabla \cdot \psi^{n+1}_{h})
- \tau^{n+1}(\psi^{n+1}_{h}).
\end{aligned}
\end{eqnarray}
Multiplying ($\ref{pf5.7}$) by $4\Delta t$ and use ($\ref{erroranalysis1}$).  Then,
\begin{eqnarray}\label{pf5.8}
\begin{aligned}
&\|\phi^{n+1}_{h}\|^{2} - \|\phi^{n}_{h}\|^{2}
+ \|2\phi^{n+1}_{h}-\phi^{n}_{h}\|^{2}
- \|2\phi^{n}_{h}-\phi^{n-1}_{h}\|^{2}
+ \|\phi^{n+1}_{h}-2\phi^{n}_{h}+\phi^{n-1}_{h}\|^{2}
\\&\quad
+ (\frac{2\gamma\Delta t}{3} + \beta) \Big( \|\nabla\cdot\phi^{n+1}_{h}\|^{2} - \|\nabla\cdot\phi^{n}_{h}\|^{2}
+ \|\nabla\cdot(2\phi^{n+1}_{h}-\phi^{n}_{h})\|^{2}
\\&\quad
- \|\nabla\cdot(2\phi^{n}_{h}-\phi^{n-1}_{h})\|^{2}
+ \|\nabla\cdot(\phi^{n+1}_{h}-2\phi^{n}_{h}+\phi^{n-1}_{h})\|^{2} \Big)
\\&\quad
+ \frac{2\beta}{3}\|\nabla\cdot(3\phi_{h}^{n+1} - 4\phi_{h}^{n} + \phi_{h}^{n-1})\|^{2}
+ 6 \|\psi^{n+1}_{h}-\phi^{n+1}_{h}\|^{2}
\\&\quad
+ 4\nu \Delta t \|\nabla\psi^{n+1}_{h}\|^{2}
+ 2\gamma\Delta t\|\nabla\cdot\phi^{n+1}_{h}\|^{2}
\\&\leq
2(3\eta^{n+1} - 4\eta^{n} + \eta^{n-1},\psi^{n+1}_{h})
+ \frac{4\gamma\Delta t}{3}(\nabla\cdot(3\phi_{h}^{n+1} - 4\phi_{h}^{n} + \phi_{h}^{n-1}),\nabla\cdot\eta^{n+1})
\\&\quad
+ \frac{2\beta}{3}(\nabla\cdot(3\eta^{n+1} - 4\eta^{n} + \eta^{n-1}),\nabla\cdot(3\phi_{h}^{n+1} - 4\phi_{h}^{n} + \phi_{h}^{n-1}))
\\&\quad
+ 4\Delta t b(2u^{n}-u^{n-1},u^{n+1},\psi^{n+1}_{h})
- 4\Delta t b(2u^{n}_{h}-u^{n-1}_{h},\hat{u}^{n+1}_{h},\psi^{n+1}_{h})
\\&\quad
+ 4\nu\Delta t (\nabla\eta^{n+1}, \nabla\psi^{n+1}_{h})
- 4\Delta t(p^{n+1} - q_{h},\nabla \cdot \psi^{n+1}_{h})
- 4\Delta t\tau^{n+1}(\psi^{n+1}_{h})
\\&\quad
+ C\beta d(1+2 \Delta t)\int^{t^{n+1}}_{t^{n-1}}\|\nabla\eta_{t}\|^{2}dt
+ 2\gamma d\Delta t\|\nabla\eta^{n+1}\|^{2}
\\&\quad
+ \frac{\beta}{2} \|\nabla\cdot(\phi^{n+1}_{h}-2\phi^{n+1}_{h}+\phi^{n}_{h})\|^{2}
+ \beta \Delta t \big( \|\nabla\cdot(2\phi^{n}_{h}-\phi^{n-1}_{h})\|^{2}
+ \|\nabla\cdot\phi^{n}_{h}\|^{2} \big).
\end{aligned}
\end{eqnarray}

Next, we need to bound the terms on the right hand side of ($\ref{pf5.8}$).
Applying Lemma $\ref{lemma1}$, the Poincar\'{e}-Friedrichs inequality,
and the Cauchy-Schwarz-Young inequality, for an arbitrary $\delta>0$, we have
\begin{eqnarray}\label{pf5.9}
\begin{aligned}
2(3\eta^{n+1} - 4\eta^{n} + \eta^{n-1},\psi^{n+1}_{h})
&\leq
C\|3\eta^{n+1} - 4\eta^{n} + \eta^{n-1}\|\|\nabla\psi^{n+1}_{h}\|
\\&\leq
\frac{C}{\delta\nu}\int^{t^{n+1}}_{t^{n-1}}\|\eta_{t}\|^{2}dt + \delta\nu \Delta t \|\nabla\psi^{n+1}_{h}\|^{2}.
\end{aligned}
\end{eqnarray}
\begin{eqnarray}\label{pf5.10}
\begin{aligned}
&\frac{4\gamma\Delta t}{3}(\nabla\cdot(3\phi_{h}^{n+1} - 4\phi_{h}^{n} + \phi_{h}^{n-1}),\nabla\cdot\eta^{n+1})
\\&\leq
\frac{4\gamma\sqrt{d}\Delta t}{3}\|\nabla\cdot(3\phi_{h}^{n+1} - 4\phi_{h}^{n} + \phi_{h}^{n-1})\|\|\nabla\eta^{n+1}\|
\\&\leq
\frac{\beta}{3}\|\nabla\cdot(3\phi_{h}^{n+1} - 4\phi_{h}^{n} + \phi_{h}^{n-1})\|^{2}
+ \frac{4 d\gamma^{2}\Delta t^{2}}{3\beta}\|\nabla\eta^{n+1}\|^{2}.
\end{aligned}
\end{eqnarray}
\begin{eqnarray}\label{pf5.11}
\begin{aligned}
&\frac{2\beta}{3}(\nabla\cdot(3\eta^{n+1} - 4\eta^{n} + \eta^{n-1}),\nabla\cdot(3\phi_{h}^{n+1} - 4\phi_{h}^{n} + \phi_{h}^{n-1}))
\\&\leq
\frac{2 \beta\sqrt{d}}{3}
\|\nabla(3\eta^{n+1} - 4\eta^{n} + \eta^{n-1})\|\|\nabla\cdot(3\phi_{h}^{n+1} - 4\phi_{h}^{n} + \phi_{h}^{n-1})\|
\\&\leq
\frac{\beta}{3}\|\nabla\cdot(3\phi_{h}^{n+1} - 4\phi_{h}^{n} + \phi_{h}^{n-1})\|^{2}
+ \frac{C\beta d \Delta t}{3}\int^{t^{n+1}}_{t^{n-1}}\|\nabla\eta_{t}\|^{2}dt.
\end{aligned}
\end{eqnarray}
Furthermore,
\begin{eqnarray}\label{pf5.12}
\begin{aligned}
&4\nu\Delta t (\nabla\eta^{n+1}, \nabla\psi^{n+1}_{h})
\leq
\frac{4\nu\Delta t}{\delta}\|\nabla\eta^{n+1}\|^{2} + \delta\nu \Delta t \|\nabla\psi^{n+1}_{h}\|^{2}.
\end{aligned}
\end{eqnarray}
\begin{eqnarray}\label{pf5.13}
\begin{aligned}
&- 4\Delta t(p^{n+1} - q_{h},\nabla \cdot \psi^{n+1}_{h})
\leq
\frac{4d\Delta t}{\delta\nu}\|p^{n+1} - q_{h}\|^{2} + \delta\nu \Delta t \|\nabla\psi^{n+1}_{h}\|^{2}.
\end{aligned}
\end{eqnarray}
Applying Lemma $\ref{c-errors-bound}$ yields
\begin{eqnarray}\label{pf5.14}
\begin{aligned}
&- 4\Delta t\tau^{n+1}(\psi^{n+1}_{h})
\\&\leq
\frac{C\Delta t^{4}}{\delta\nu}\int^{t^{n+1}}_{t^{n-1}}\|u_{ttt}\|_{-1}^{2}dt
+ \frac{C\Delta t^{4}}{\delta\nu}\int^{t^{n+1}}_{t^{n-1}}\|\nabla u_{tt}\|^{2}dt
+ \delta\nu\Delta t\|\nabla\psi^{n+1}_{h}\|^{2}.
\end{aligned}
\end{eqnarray}
For the nonlinear terms, we treat them as follows.  Adding and subtracting $4\Delta t b(2u^{n}_{h}-u^{n-1}_{h},u^{n+1},\psi^{n+1}_{h})$ yields
\begin{eqnarray}\label{pf5.15}
\begin{aligned}
&4\Delta t b(2u^{n}-u^{n-1},u^{n+1},\psi^{n+1}_{h})
- 4\Delta t b(2u^{n}_{h}-u^{n-1}_{h},\hat{u}^{n+1}_{h},\psi^{n+1}_{h})
\\&=
4\Delta t \Big( b(2\eta^{n}-\eta^{n-1},u^{n+1},\psi^{n+1}_{h})
- b(2\phi^{n}_{h}-\phi^{n-1}_{h},u^{n+1},\psi^{n+1}_{h})
\\&\quad
+ b(2\hat{u}_{h}^{n}-\hat{u}_{h}^{n-1},\eta^{n+1},\psi^{n+1}_{h}) \Big).
\end{aligned}
\end{eqnarray}
Then,
\begin{eqnarray}\label{pf5.16}
\begin{aligned}
&4\Delta t b(2\eta^{n}-\eta^{n-1},u^{n+1},\psi^{n+1}_{h})
\leq
4C\Delta t \|\nabla(2\eta^{n}-\eta^{n-1})\|\|\nabla u^{n+1}\|\|\nabla\psi^{n+1}_{h}\|
\\&\leq
\frac{4C\Delta t}{\delta\nu} \|\nabla(2\eta^{n}-\eta^{n-1})\|^{2}\|\nabla u^{n+1}\|^{2}
+ \delta\nu\Delta t\|\nabla\psi^{n+1}_{h}\|^{2}
\\&\leq
\frac{16C\Delta t}{\delta\nu} ( \|\nabla\eta^{n}\|^{2} + \|\nabla\eta^{n-1}\|^{2} ) \|\nabla u\|_{\infty,0}^{2}
+ \delta\nu\Delta t\|\nabla\psi^{n+1}_{h}\|^{2},
\end{aligned}
\end{eqnarray}
\begin{eqnarray}\label{pf5.17}
\begin{aligned}
&-4\Delta t b(2\phi^{n}_{h}-\phi^{n-1}_{h},u^{n+1},\psi^{n+1}_{h})
\leq
4C\Delta t \|2\phi^{n}_{h}-\phi^{n-1}_{h}\|\|u^{n+1}\|_{2}\|\nabla\psi^{n+1}_{h}\|
\\&\leq
\frac{4C\Delta t}{\delta\nu} \|2\phi^{n}_{h}-\phi^{n-1}_{h}\|^{2}\|u^{n+1}\|_{2}^{2}
+ \delta\nu\Delta t\|\nabla\psi^{n+1}_{h}\|^{2}
\\&\leq
\frac{8C\Delta t}{\delta\nu} ( \|2\phi^{n}_{h}-\phi^{n-1}_{h}\|^{2} + \|\phi^{n}_{h}\|^{2}) \|u^{n+1}\|_{2}^{2}
+ \delta\nu\Delta t\|\nabla\psi^{n+1}_{h}\|^{2},
\end{aligned}
\end{eqnarray}
\begin{eqnarray}\label{pf5.18}
\begin{aligned}
&4\Delta t b(2\hat{u}_{h}^{n}-\hat{u}_{h}^{n-1},\eta^{n+1},\psi^{n+1}_{h})
\leq
4C\Delta t \|\nabla(2\hat{u}_{h}^{n}-\hat{u}_{h}^{n-1})\|\|\nabla\eta^{n+1}\|\|\nabla\psi^{n+1}_{h}\|
\\&\leq
\frac{4C\Delta t}{\delta\nu} \|\nabla(2\hat{u}_{h}^{n}-\hat{u}_{h}^{n-1})\|^{2}\|\nabla\eta^{n+1}\|^{2}
+ \delta\nu\Delta t\|\nabla\psi^{n+1}_{h}\|^{2}
\\&\leq
\frac{16C\Delta t}{\delta\nu} ( \|\nabla\hat{u}_{h}^{n}\|^{2} + \|\nabla\hat{u}_{h}^{n-1}\|^{2} ) \|\nabla\eta\|_{\infty,0}^{2}
+ \delta\nu\Delta t\|\nabla\psi^{n+1}_{h}\|^{2}.
\end{aligned}
\end{eqnarray}
Setting $\delta= \frac{2}{7}$ and using the estimates ($\ref{pf5.9}$)-($\ref{pf5.18}$) in ($\ref{pf5.8}$) yields
\begin{eqnarray}\label{pf5.19}
\begin{aligned}
&\|\phi^{n+1}_{h}\|^{2} - \|\phi^{n}_{h}\|^{2}
+ \|2\phi^{n+1}_{h}-\phi^{n}_{h}\|^{2}
- \|2\phi^{n}_{h}-\phi^{n-1}_{h}\|^{2}
+ \|\phi^{n+1}_{h}-2\phi^{n}_{h}+\phi^{n-1}_{h}\|^{2}
\\&\quad
+ (\frac{2\gamma\Delta t}{3} + \beta) \Big( \|\nabla\cdot\phi^{n+1}_{h}\|^{2} - \|\nabla\cdot\phi^{n}_{h}\|^{2}
+ \|\nabla\cdot(2\phi^{n+1}_{h}-\phi^{n}_{h})\|^{2}
- \|\nabla\cdot(2\phi^{n}_{h}-\phi^{n-1}_{h})\|^{2}
\\&\quad
+ \frac{1}{2}\|\nabla\cdot(\phi^{n+1}_{h}-2\phi^{n}_{h}+\phi^{n-1}_{h})\|^{2} \Big)
+ 6 \|\psi^{n+1}_{h}-\phi^{n+1}_{h}\|^{2}
+ 2\nu \Delta t \|\nabla\psi^{n+1}_{h}\|^{2}
+ 2\gamma\Delta t\|\nabla\cdot\phi^{n+1}_{h}\|^{2}
\\&\leq
\frac{C\Delta t}{\nu} \|u^{n+1}\|_{2}^{2} ( \|2\phi^{n}_{h}-\phi^{n-1}_{h}\|^{2} + \|\phi^{n}_{h}\|^{2}) + \beta \Delta t (\|\nabla\cdot(2\phi^{n}_{h}-\phi^{n-1}_{h})\|^{2} + \|\nabla\cdot\phi^{n}_{h}\|^{2})
\\&\quad
+ {C\beta d \Delta t}\int^{t^{n+1}}_{t^{n-1}}\|\nabla\eta_{t}\|^{2}dt
+ \frac{C}{\nu}\int^{t^{n+1}}_{t^{n-1}}\|\eta_{t}\|^{2}dt
+ C(\frac{d\gamma^{2}\Delta t}{\beta} + \nu)\Delta t\|\nabla\eta^{n+1}\|^{2}
\\&\quad
+ \frac{Cd\Delta t}{\nu}\|p^{n+1} - q_{h}\|^{2}
+ \frac{C\Delta t^{4}}{\nu}\int^{t^{n+1}}_{t^{n-1}}\|u_{ttt}\|_{-1}^{2}dt
+ \frac{C\Delta t^{4}}{\nu}\int^{t^{n+1}}_{t^{n-1}}\|\nabla u_{tt}\|^{2}dt
\\&\quad
+ \frac{C\Delta t}{\nu} ( \|\nabla\eta^{n}\|^{2} + \|\nabla\eta^{n-1}\|^{2} )
+ \frac{C\Delta t}{\nu} ( \|\nabla\hat{u}_{h}^{n}\|^{2} + \|\nabla\hat{u}_{h}^{n-1}\|^{2} ) \|\nabla\eta\|_{\infty,0}^{2}.
\end{aligned}
\end{eqnarray}
Sum ($\ref{pf5.19}$) from $n=1$ to $N-1$ to get
\begin{eqnarray}\label{pf5.20}
\begin{aligned}
&\|\phi^{N}_{h}\|^{2} + \|2\phi^{N}_{h}-\phi^{N-1}_{h}\|^{2}
+ (\frac{2\gamma\Delta t}{3} + \beta)
\Big( \|\nabla\cdot\phi^{N}_{h}\|^{2} + \|\nabla\cdot(2\phi^{N}_{h}-\phi^{N-1}_{h})\|^{2} \Big)
\\&\quad
+ \frac{1}{2}\sum^{N-1}_{n=1}\|\nabla\cdot(\phi^{n+1}_{h}-2\phi^{n}_{h}+\phi^{n-1}_{h})\|^{2}
+ 2\nu \Delta t \sum^{N-1}_{n=1}\|\nabla\psi^{n+1}_{h}\|^{2}
+ 2\gamma\Delta t \sum^{N-1}_{n=1}\|\nabla\cdot\phi^{n+1}_{h}\|^{2}
\\&\leq
\Delta t\sum^{N-1}_{n=1} \Big(C\nu^{-1}\|u^{n+1}\|_{2}^{2} ( \|\phi^{n}_{h}\|^{2} + \|2\phi^{n}_{h}-\phi^{n-1}_{h}\|^{2} ) + \beta (\|\nabla\cdot\phi^{n}_{h}\|^{2} + \|\nabla\cdot(2\phi^{n}_{h}-\phi^{n-1}_{h})\|^{2}) \Big)
\\&\quad
+ \frac{C\beta d \Delta t}{3}\|\nabla\eta_{t}\|_{2,0}^{2}
+ \frac{C}{\nu}\|\eta_{t}\|_{2,0}^{2}
+ C(\frac{d\gamma^{2}\Delta t}{\beta} + \nu + \frac{1}{\nu}) |\|\nabla\eta\||_{2,0}^{2}
\\&\quad
+ \frac{Cd}{\nu}  |\|p - q_{h}\||_{2,0}^{2}
+ \frac{C\Delta t^{4}}{\nu}\|u_{ttt}\|_{2,-1}^{2}
+ \frac{C\Delta t^{4}}{\nu}\|\nabla u_{tt}\|_{2,0}^{2}
+ \frac{C}{\nu^{2}}( \nu\Delta t\sum^{N-1}_{n=0} \|\nabla\hat{u}_{h}^{n}\|^{2} ) \|\nabla\eta\|_{\infty,0}^{2}
\\&\quad
+ \|\phi^{1}_{h}\|^{2} + \|2\phi^{1}_{h}-\phi^{0}_{h}\|^{2}
+ (\frac{2\gamma\Delta t}{3} + \beta)
\Big( \|\nabla\cdot\phi^{1}_{h}\|^{2} + \|\nabla\cdot(2\phi^{1}_{h}-\phi^{0}_{h})\|^{2} \Big).
\end{aligned}
\end{eqnarray}
Denote $C^{*}=\max\{\frac{C}{\nu}|\|u\||_{2,2}^{2},1\}$.
Then, Lemma \ref{gronwall2}, the boundedness of $\nu\Delta t \sum\limits^{N-1}_{n=1} \|\nabla\hat{u}_{h}^{n+1}\|^{2}$ (Theorem $\ref{stability}$), and taking infimums over $V_{h}$ and $Q_{h}$ yield
\begin{eqnarray}\label{pf5.21}
\begin{aligned}
&\|\phi^{N}_{h}\|^{2} + \|2\phi^{N}_{h}-\phi^{N-1}_{h}\|^{2}
+ (\frac{2\gamma\Delta t}{3} + \beta)
\Big( \|\nabla\cdot\phi^{N}_{h}\|^{2} + \|\nabla\cdot(2\phi^{N}_{h}-\phi^{N-1}_{h})\|^{2} \Big)
\\&\quad
+ \frac{1}{2}\sum^{N-1}_{n=1}\|\nabla\cdot(\phi^{n+1}_{h}-2\phi^{n}_{h}+\phi^{n-1}_{h})\|^{2}
+ 2\nu \Delta t \sum^{N-1}_{n=1}\|\nabla\psi^{n+1}_{h}\|^{2}
+ 2\gamma\Delta t \sum^{N-1}_{n=1}\|\nabla\cdot\phi^{n+1}_{h}\|^{2}
\\&\leq
C\exp(C^{*}T)\bigg\{\inf\limits_{v_{h}\in V_{h}}
\Big(  \beta(1+\Delta t) \|\nabla(u - v_{h})_{t}\|_{2,0}^{2}
+ \frac{1}{\nu}\|(u - v_{h})_{t}\|_{2,0}^{2}
\\&\quad
+ (\frac{\gamma^{2}\Delta t}{\beta} + \nu + \frac{1}{\nu}) |\|\nabla(u - v_{h})\||_{2,0}^{2}
+ \frac{1}{\nu^{2}} \|\nabla(u - v_{h})\|_{\infty,0}^{2} \Big)
\\&\quad
+ \frac{1}{\nu} \inf\limits_{q_{h}\in Q_{h}} |\|p - q_{h}\||_{2,0}^{2}
+ \frac{1}{\nu}\Delta t^{4}
\\&\quad
+ \|\phi^{1}_{h}\|^{2} + \|2\phi^{1}_{h}-\phi^{0}_{h}\|^{2}
+ (\frac{2\gamma\Delta t}{3} + \beta)
\Big( \|\nabla\cdot\phi^{1}_{h}\|^{2} + \|\nabla\cdot(2\phi^{1}_{h}-\phi^{0}_{h})\|^{2} \Big)\bigg\}.
\end{aligned}
\end{eqnarray}
Then, using Lemma \ref{lemma0} and the triangle inequality completes the proof.
\end{proof}

The above result has dependence on $\beta^{-1}$.  Consequently, we consider the convergency of \textit{BDF2-mgd}  when $ \beta=0 $ separately.
\begin{theorem}\label{error0}
	Assume the true solution $u, p$ satisfy ($\ref{regularity1}$) and ($\ref{regularity2}$). 
	Then, when $ \beta=0 $, we have the following estimates for \textit{BDF2-mgd}.
	\begin{eqnarray}\label{erroranalysis0}
	\begin{aligned}
	&\|e_{u}^{N}\|^{2} + \|2e_{u}^{N}-e_{u}^{N-1}\|^{2}
	+ \frac{2\gamma\Delta t}{3}
	\Big( \|\nabla\cdot e_{u}^{N}\|^{2} + \|\nabla\cdot(2e_{u}^{N}-e_{u}^{N-1})\|^{2} \Big)
	\\&\quad
	+ 2\nu \Delta t \sum^{N-1}_{n=1}\|\nabla e_{\hat{u}}^{n+1}\|^{2}
	+ 2\gamma\Delta t \sum^{N-1}_{n=1}\|\nabla\cdot e_{u}^{n+1}\|^{2}
	\\&\leq
	C\exp(C^{**}T)
	\bigg\{\inf\limits_{v_{h}\in X_{h}}
	\Big( \frac{1}{\nu}\|(u - v_{h})_{t}\|_{2,0}^{2}
	+ \|u - v_{h}\|_{\infty,0}^{2}	
	+ ( \gamma + \nu + \frac{1}{\nu}) |\|\nabla(u - v_{h})\||_{2,0}^{2}
	\\&\quad
	+ (\frac{2\gamma\Delta t}{3} + \frac{1}{\nu^{2}}) \|\nabla(u - v_{h})\|_{\infty,0}^{2} \Big)
	+ \frac{1}{\nu} \inf\limits_{q_{h}\in Q_{h}} |\|p - q_{h}\||_{2,0}^{2}
	+ \frac{1}{\nu}\Delta t^{4}
	\\&\quad
	+ \|e_{u}^{1}\|^{2} + \|2e_{u}^{1}-e_{u}^{0}\|^{2}
	+ \gamma\Delta t
	 \|\nabla\cdot e_{u}^{1}\|^{2}
	+ \gamma\Delta t \|\nabla\cdot(2e_{u}^{1}-e_{u}^{0})\|^{2} \bigg\}.
	\end{aligned}
	\end{eqnarray}
\end{theorem}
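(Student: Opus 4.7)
The plan is to run the proof of Theorem \ref{error} in parallel, modifying only the two places where $\beta^{-1}$ appeared. When $\beta=0$, Step 2's error equation simplifies to $(\frac{3(\psi^{n+1}_h - \phi^{n+1}_h)}{2\Delta t}, v_h) + \gamma(\nabla\cdot(\eta^{n+1} - \phi^{n+1}_h), \nabla\cdot v_h) = 0$ for all $v_h \in X_h$. First, I would derive the $\beta=0$ analogue of (\ref{erroranalysis1}): testing with $v_h = \frac{4\Delta t}{3}\phi^{n+1}_h$, applying the polarization identity, and invoking Cauchy-Schwarz-Young on the $\gamma$-cross term yields
\begin{eqnarray*}
\|\psi^{n+1}_h\|^2 \geq \|\phi^{n+1}_h\|^2 + \|\psi^{n+1}_h - \phi^{n+1}_h\|^2 + \frac{2\gamma\Delta t}{3}\|\nabla\cdot\phi^{n+1}_h\|^2 - \frac{2\gamma d\Delta t}{3}\|\nabla\eta^{n+1}\|^2.
\end{eqnarray*}
Note that, because Step 2 no longer involves backward differences of $u_h$, no $\|\nabla\eta_t\|$ terms are generated, consistent with the absence of $\|\nabla(u-v_h)_t\|_{2,0}$ in the claimed bound.

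Next, following (\ref{pf5.5})--(\ref{pf5.8}) line by line, I would test Step 1's error equation with $\psi^{n+1}_h \in V_h$ (killing the pressure modulo an infimum), test Step 2's error equation with $v_h=\frac{3\phi^{n+1}_h - 4\phi^n_h + \phi^{n-1}_h}{3}$ to rewrite the cross term $(\frac{3\phi^{n+1}_h - 4\phi^n_h + \phi^{n-1}_h}{2\Delta t},\psi^{n+1}_h-\phi^{n+1}_h)$, combine, multiply by $4\Delta t$, and apply the new lower bound above in place of (\ref{erroranalysis1}). The resulting left-hand side telescopes in the standard BDF2 manner and produces the positive contributions $\frac{2\gamma\Delta t}{3}[\|\nabla\cdot\phi^{n+1}_h\|^2 - \|\nabla\cdot\phi^n_h\|^2 + \|\nabla\cdot(2\phi^{n+1}_h - \phi^n_h)\|^2 - \|\nabla\cdot(2\phi^n_h - \phi^{n-1}_h)\|^2] + 2\gamma\Delta t\|\nabla\cdot\phi^{n+1}_h\|^2 + 6\|\psi^{n+1}_h-\phi^{n+1}_h\|^2 + 4\nu\Delta t\|\nabla\psi^{n+1}_h\|^2$. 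The bounds (\ref{pf5.9}) and (\ref{pf5.12})--(\ref{pf5.18}) on consistency, pressure, interpolation, and the three nonlinear trilinear contributions carry over verbatim.

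The main obstacle is handling the single term $\frac{4\gamma\Delta t}{3}(\nabla\cdot\eta^{n+1},\nabla\cdot(3\phi_h^{n+1}-4\phi_h^n+\phi_h^{n-1}))$, which in Theorem \ref{error} was absorbed by the $\frac{2\beta}{3}\|\nabla\cdot(3\phi_h^{n+1}-4\phi_h^n+\phi_h^{n-1})\|^2$ term that is no longer available. I would instead apply Cauchy-Schwarz-Young with a weight $\alpha = O(\gamma\Delta t)$, followed by the triangle inequality $\|\nabla\cdot(3\phi_h^{n+1}-4\phi_h^n+\phi_h^{n-1})\|^2 \leq C(\|\nabla\cdot\phi_h^{n+1}\|^2 + \|\nabla\cdot\phi_h^n\|^2 + \|\nabla\cdot\phi_h^{n-1}\|^2)$, to obtain
\begin{eqnarray*}
\tfrac{4\gamma\Delta t}{3}|(\nabla\cdot\eta^{n+1},\nabla\cdot(3\phi_h^{n+1}-4\phi_h^n+\phi_h^{n-1}))| \leq C\gamma d\Delta t\|\nabla\eta^{n+1}\|^2 + \gamma\Delta t\|\nabla\cdot\phi_h^{n+1}\|^2 + C\gamma\Delta t(\|\nabla\cdot\phi_h^n\|^2 + \|\nabla\cdot\phi_h^{n-1}\|^2).
\end{eqnarray*}
The $\gamma\Delta t\|\nabla\cdot\phi_h^{n+1}\|^2$ piece is hidden in the $2\gamma\Delta t\|\nabla\cdot\phi_h^{n+1}\|^2$ already on the left-hand side, while the $\|\nabla\cdot\phi_h^n\|^2$ and $\|\nabla\cdot\phi_h^{n-1}\|^2$ contributions are rolled into the discrete Gronwall lemma; this is the source of the enlarged kernel and the new constant $C^{**} = \max\{\frac{C}{\nu}|\|u\||_{2,2}^2, C\gamma, 1\}$. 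Finally, summing the resulting inequality from $n=1$ to $N-1$, invoking Theorem \ref{stability} to bound $\nu\Delta t\sum\|\nabla\hat{u}_h^{n+1}\|^2$ uniformly, applying Lemma \ref{gronwall2}, taking infima over $V_h$ and $Q_h$, and using Lemma \ref{lemma0} together with the triangle inequality to convert estimates on $\phi_h^n, \psi_h^n$ into estimates on $e_u^n, e_{\hat{u}}^n$ yields (\ref{erroranalysis0}).
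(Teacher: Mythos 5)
Your overall architecture matches the paper's: rerun the proof of Theorem \ref{error} with the $\beta$-terms deleted, note that the $\|\nabla\eta_t\|$ contributions disappear, and isolate the single problematic term $\frac{4\gamma\Delta t}{3}(\nabla\cdot(3\phi_h^{n+1}-4\phi_h^{n}+\phi_h^{n-1}),\nabla\cdot\eta^{n+1})$, which can no longer be absorbed by $\frac{\beta}{3}\|\nabla\cdot(3\phi_h^{n+1}-4\phi_h^{n}+\phi_h^{n-1})\|^{2}$. Your preliminary $\beta=0$ lower bound for $\|\psi_h^{n+1}\|^2$ is also correct. The gap is in how you dispose of that problematic term. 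You bound it by $C\gamma d\Delta t\|\nabla\eta^{n+1}\|^{2}+\gamma\Delta t\|\nabla\cdot\phi_h^{n+1}\|^{2}+C\gamma\Delta t(\|\nabla\cdot\phi_h^{n}\|^{2}+\|\nabla\cdot\phi_h^{n-1}\|^{2})$ and claim the last two pieces are ``rolled into the discrete Gronwall lemma,'' producing $C^{**}=\max\{\frac{C}{\nu}|\|u\||_{2,2}^{2},C\gamma,1\}$. This does not work: the quantity $\|\nabla\cdot\phi_h^{n}\|^{2}$ appears on the left-hand side at level $n$ only with the prefactor $\frac{2\gamma\Delta t}{3}$ (or inside the dissipation sum with prefactor $\gamma\Delta t$), so to dominate $C\gamma\Delta t\|\nabla\cdot\phi_h^{n}\|^{2}$ by $\Delta t\,\kappa_{n}a_{n}$ in Lemma \ref{gronwall2} you would need $\kappa_{n}=O(1/\Delta t)$, giving $\exp(CT/\Delta t)$ rather than $\exp(C\gamma T)$. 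Moreover, even a kernel of size $C\gamma$ would contradict the theorem as stated, since the paper's $C^{**}=\frac{C}{\nu}|\|u\||_{2,2}^{2}$ is independent of $\gamma$ and the constant $C$ in \eqref{erroranalysis0} never depends on the grad-div parameters.

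The repair is the careful bookkeeping in the paper's \eqref{pf6.2}--\eqref{pf6.3}: write $3\phi_h^{n+1}-4\phi_h^{n}+\phi_h^{n-1}=2(\phi_h^{n+1}-\phi_h^{n})+(\phi_h^{n+1}-2\phi_h^{n}+\phi_h^{n-1})$ and choose the Young weights explicitly so that the resulting quadratic pieces are exactly $\frac{2\gamma\Delta t}{3}\|\nabla\cdot(\phi_h^{n+1}-2\phi_h^{n}+\phi_h^{n-1})\|^{2}+\frac{\gamma\Delta t}{2}\|\nabla\cdot\phi_h^{n+1}\|^{2}+\frac{\gamma\Delta t}{2}\|\nabla\cdot\phi_h^{n}\|^{2}$ plus $O(\gamma d\Delta t)\|\nabla\eta^{n+1}\|^{2}$. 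The first piece is cancelled by the identical term already on the left; the middle two combine into the telescoping difference $\frac{\gamma\Delta t}{2}(\|\nabla\cdot\phi_h^{n+1}\|^{2}-\|\nabla\cdot\phi_h^{n}\|^{2})$ plus $\gamma\Delta t\|\nabla\cdot\phi_h^{n+1}\|^{2}$, the latter being absorbed by the $2\gamma\Delta t\|\nabla\cdot\phi_h^{n+1}\|^{2}$ on the left (leaving $\gamma\Delta t\sum_{n}\|\nabla\cdot\phi_h^{n+1}\|^{2}$ of dissipation). No Gronwall kernel involving $\gamma$ is generated, and the Gronwall step proceeds exactly as in Theorem \ref{error} with kernel $\frac{C}{\nu}\|u^{n+1}\|_{2}^{2}$ only. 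With that replacement your argument goes through.
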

\begin{proof}
Similar to ($ \ref{pf5.8} $), we have 
\begin{eqnarray}\label{pf6.1}
\begin{aligned}
&\|\phi^{n+1}_{h}\|^{2} - \|\phi^{n}_{h}\|^{2}
+ \|2\phi^{n+1}_{h}-\phi^{n}_{h}\|^{2}
- \|2\phi^{n}_{h}-\phi^{n-1}_{h}\|^{2}
+ \|\phi^{n+1}_{h}-2\phi^{n}_{h}+\phi^{n-1}_{h}\|^{2}
\\&\quad
+ \frac{2\gamma\Delta t}{3}  \Big( \|\nabla\cdot\phi^{n+1}_{h}\|^{2} - \|\nabla\cdot\phi^{n}_{h}\|^{2}
+ \|\nabla\cdot(2\phi^{n+1}_{h}-\phi^{n}_{h})\|^{2}
\\&\quad
- \|\nabla\cdot(2\phi^{n}_{h}-\phi^{n-1}_{h})\|^{2}
+ \|\nabla\cdot(\phi^{n+1}_{h}-2\phi^{n}_{h}+\phi^{n-1}_{h})\|^{2} \Big)
\\&\quad
+ 6 \|\psi^{n+1}_{h}-\phi^{n+1}_{h}\|^{2}
+ 4\nu \Delta t \|\nabla\psi^{n+1}_{h}\|^{2}
+ 2\gamma\Delta t\|\nabla\cdot\phi^{n+1}_{h}\|^{2}
\\&\leq
2(3\eta^{n+1} - 4\eta^{n} + \eta^{n-1},\psi^{n+1}_{h})
+ \frac{4\gamma\Delta t}{3}(\nabla\cdot(3\phi_{h}^{n+1} - 4\phi_{h}^{n} + \phi_{h}^{n-1}),\nabla\cdot\eta^{n+1})
\\&\quad
+ 4\Delta t b(2u^{n}-u^{n-1},u^{n+1},\psi^{n+1}_{h})
- 4\Delta t b(2u^{n}_{h}-u^{n-1}_{h},\hat{u}^{n+1}_{h},\psi^{n+1}_{h})
\\&\quad
+ 4\nu\Delta t (\nabla\eta^{n+1}, \nabla\psi^{n+1}_{h})
- 4\Delta t(p^{n+1} - q_{h},\nabla \cdot \psi^{n+1}_{h})
- 4\Delta t\tau^{n+1}(\psi^{n+1}_{h})
\\&\quad
+ \frac{4\gamma d\Delta t}{3}\|\nabla\eta^{n+1}\|^{2}.
\end{aligned}
\end{eqnarray}
Since $ \beta=0 $, we estimate $\frac{4\gamma\Delta t}{3}(\nabla\cdot(3\phi_{h}^{n+1} - 4\phi_{h}^{n} + \phi_{h}^{n-1}),\nabla\cdot\eta^{n+1}) $ as follows,
\begin{eqnarray}\label{pf6.2}
\begin{aligned}
&\frac{4\gamma\Delta t}{3}(\nabla\cdot(3\phi_{h}^{n+1} - 4\phi_{h}^{n} + \phi_{h}^{n-1}),\nabla\cdot\eta^{n+1})
\\&\leq
\frac{4\gamma\sqrt{d}\Delta t}{3}
\Big(  2\|\nabla\cdot \phi_{h}^{n+1} \| 
+ 2\|\nabla\cdot \phi_{h}^{n} \|  
+  \|\nabla\cdot(\phi_{h}^{n+1} - 2\phi_{h}^{n} + \phi_{h}^{n-1})\|\Big) 
\|\nabla\eta^{n+1}\|
\\&\leq
\frac{2\gamma\Delta t}{3}\|\nabla\cdot(\phi_{h}^{n+1} - 2\phi_{h}^{n} + \phi_{h}^{n-1})\|^{2}
+ \frac{\gamma\Delta t}{2}\|\nabla\cdot \phi_{h}^{n+1}\|^{2}
+ \frac{\gamma\Delta t}{2}\|\nabla\cdot \phi_{h}^{n}\|^{2}
+ \frac{70\gamma d \Delta t}{9} \|\nabla\eta^{n+1}\|^{2}.
\end{aligned}
\end{eqnarray}
Then we have 
\begin{eqnarray}\label{pf6.3}
\begin{aligned}
	&\|\phi^{n+1}_{h}\|^{2} - \|\phi^{n}_{h}\|^{2}
	+ \|2\phi^{n+1}_{h}-\phi^{n}_{h}\|^{2}
	- \|2\phi^{n}_{h}-\phi^{n-1}_{h}\|^{2}
	+ \|\phi^{n+1}_{h}-2\phi^{n}_{h}+\phi^{n-1}_{h}\|^{2}
	\\&\quad
	+ \frac{2\gamma\Delta t}{3} 
	\Big( \|\nabla\cdot\phi^{n+1}_{h}\|^{2} - \|\nabla\cdot\phi^{n}_{h}\|^{2}
	+ \|\nabla\cdot(2\phi^{n+1}_{h}-\phi^{n}_{h})\|^{2}
	- \|\nabla\cdot(2\phi^{n}_{h}-\phi^{n-1}_{h})\|^{2}
	\Big)
	\\&\quad 
	+ \frac{\gamma\Delta t}{2} ( 
	\|\nabla\cdot\phi^{n+1}_{h}\|^{2}
	- \|\nabla\cdot\phi^{n}_{h}\|^{2}
	)		
	+ 6 \|\psi^{n+1}_{h}-\phi^{n+1}_{h}\|^{2}
	+ 2\nu \Delta t \|\nabla\psi^{n+1}_{h}\|^{2}
	+ \gamma\Delta t\|\nabla\cdot\phi^{n+1}_{h}\|^{2}
	\\&\leq
	\frac{C\Delta t}{\nu} \|u^{n+1}\|_{2}^{2} ( \|2\phi^{n}_{h}-\phi^{n-1}_{h}\|^{2} + \|\phi^{n}_{h}\|^{2} + \|\phi^{n-1}_{h}\|^{2} )
	\\&\quad
	+ \frac{C}{\nu}\int^{t^{n+1}}_{t^{n-1}}\|\eta_{t}\|^{2}dt
	+ C (\nu + \gamma d)\Delta t\|\nabla\eta^{n+1}\|^{2}
	\\&\quad
	+ \frac{Cd\Delta t}{\nu}\|p^{n+1} - q_{h}\|^{2}
	+ \frac{C\Delta t^{4}}{\nu}\int^{t^{n+1}}_{t^{n-1}}\|u_{ttt}\|_{-1}^{2}dt
	+ \frac{C\Delta t^{4}}{\nu}\int^{t^{n+1}}_{t^{n-1}}\|\nabla u_{tt}\|^{2}dt
	\\&\quad
	+ \frac{C\Delta t}{\nu} ( \|\nabla\eta^{n}\|^{2} + \|\nabla\eta^{n-1}\|^{2} )
	+ \frac{C\Delta t}{\nu} ( \|\nabla\hat{u}_{h}^{n}\|^{2} + \|\nabla\hat{u}_{h}^{n-1}\|^{2} ) \|\nabla\eta\|_{\infty,0}^{2}.
\end{aligned}
\end{eqnarray}
Sum ($\ref{pf6.3}$) from $n=1$ to $N-1$ to get
\begin{eqnarray}\label{pf6.4}
\begin{aligned}
&\|\phi^{N}_{h}\|^{2} + \|2\phi^{N}_{h}-\phi^{N-1}_{h}\|^{2}
+ \frac{2\gamma\Delta t}{3} 
\Big( \|\nabla\cdot\phi^{N}_{h}\|^{2} + \|\nabla\cdot(2\phi^{N}_{h}-\phi^{N-1}_{h})\|^{2} \Big)
\\&\quad
+ 2\nu \Delta t \sum^{N-1}_{n=1}\|\nabla\psi^{n+1}_{h}\|^{2}
+ \gamma\Delta t \sum^{N-1}_{n=1}\|\nabla\cdot\phi^{n+1}_{h}\|^{2}
\\&\leq
\frac{C\Delta t}{\nu} \sum^{N-1}_{n=1} \|u^{n+1}\|_{2}^{2} ( \|\phi^{n}_{h}\|^{2} + \|2\phi^{n}_{h}-\phi^{n-1}_{h}\|^{2} )
+ C(\gamma d + \nu + \frac{1}{\nu}) |\|\nabla\eta\||_{2,0}^{2}
\\&\quad
+ \frac{C}{\nu}\|\eta_{t}\|_{2,0}^{2}
+ \frac{Cd}{\nu}  |\|p - q_{h}\||_{2,0}^{2}
+ \frac{C\Delta t^{4}}{\nu}\|u_{ttt}\|_{2,-1}^{2}
+ \frac{C\Delta t^{4}}{\nu}\|\nabla u_{tt}\|_{2,0}^{2}
+ \frac{C}{\nu^{2}} \|\nabla\eta\|_{\infty,0}^{2}
\\&\quad
+ \|\phi^{1}_{h}\|^{2} + \|2\phi^{1}_{h}-\phi^{0}_{h}\|^{2}
+ \frac{7\gamma\Delta t}{6}  \|\nabla\cdot\phi^{1}_{h}\|^{2} 
+ \frac{2\gamma\Delta t}{3} \|\nabla\cdot(2\phi^{1}_{h}-\phi^{0}_{h})\|^{2} \Big).
\end{aligned}
\end{eqnarray}
Denote $C^{**}=\frac{C}{\nu}|\|u\||_{2,2}^{2}$.  The result then follows by similar arguments as in Theorem \ref{error}.
\end{proof}

\begin{corollary}
	Under the assumptions of Theorem $\ref{error}$,
	suppose that $(X_{h},Q_{h})$ is given by P2-P1 Taylor-Hood approximation elements ($k=2, m=1$).
	Then, the following estimate holds for \textit{BDF2-mgd}.
	\begin{eqnarray}\label{taylor-hood-error}
	\begin{aligned}
	&\|e_{u}^{N}\|^{2} + \|2e_{u}^{N}-e_{u}^{N-1}\|^{2}
	+ (\frac{2\gamma\Delta t}{3} + \beta)
	\Big( \|\nabla\cdot e_{u}^{N}\|^{2} + \frac{1}{2}\|\nabla\cdot(2e_{u}^{N}-e_{u}^{N-1})\|^{2} \Big)
	\\&\quad
	+ 2\nu \Delta t \sum^{N-1}_{n=1}\|\nabla e_{\hat{u}}^{n+1}\|^{2}
	+ 2\gamma\Delta t \sum^{N-1}_{n=1}\|\nabla\cdot e_{u}^{n+1}\|^{2}
	\\&\leq
	C \Big( h^{6} +  h^{4} + \Delta t \,h^{4} + \Delta t^{4}
	\\&\quad
	+ \|e_{u}^{1}\|^{2} + \|2e_{u}^{1}-e_{u}^{0}\|^{2}
	+ ( \gamma\Delta t + \beta)
	(\|\nabla\cdot e_{u}^{1}\|^{2}
	+ \|\nabla\cdot(2e_{u}^{1}-e_{u}^{0})\|^{2}) \Big).
	\end{aligned}
	\end{eqnarray}
\end{corollary}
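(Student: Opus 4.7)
The plan is to specialize Theorem \ref{error} to the Taylor-Hood P2-P1 case ($k=2$, $m=1$) by a direct application of the approximation properties stated in the preliminaries. No new machinery is needed beyond interpolation and bookkeeping.

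First, I would substitute the Taylor-Hood interpolants of $u$ and $p$ for the infima over $X_{h}$ and $Q_{h}$ appearing on the right-hand side of (\ref{erroranalysis2}). The approximation hypotheses give $\|u-v_{h}\|_{\infty,0}^{2} \leq C h^{6}$ and $\|(u-v_{h})_{t}\|_{2,0}^{2} \leq C h^{6}$ from the $L^{2}$-interpolation estimate at order $h^{k+1}$, while $|\|\nabla(u-v_{h})\||_{2,0}^{2}$, $\|\nabla(u-v_{h})\|_{\infty,0}^{2}$, $\|\nabla(u-v_{h})_{t}\|_{2,0}^{2}$, and $|\|p-q_{h}\||_{2,0}^{2}$ are each bounded by $C h^{4}$ from the $H^{1}$-interpolation estimate at order $h^{k}$ and the pressure estimate at order $h^{m+1}$. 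The regularity assumptions (\ref{regularity1})--(\ref{regularity2}) supply exactly the norms of $u$, $u_{t}$, and $p$ required for these bounds to be finite.

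Next, I would group the resulting contributions by their powers of $h$ and $\Delta t$. The $h^{6}$ terms come from $\|u-v_{h}\|_{\infty,0}^{2}$ and $\tfrac{1}{\nu}\|(u-v_{h})_{t}\|_{2,0}^{2}$; the pure $h^{4}$ terms come from the coefficients $\tfrac{\gamma^{2}\Delta t}{\beta}+\gamma+\nu+\tfrac{1}{\nu}$, $\beta+\tfrac{1}{\nu^{2}}$, $\beta$, and $\tfrac{1}{\nu}$ multiplying the $H^{1}$ and pressure interpolation errors; and the separate $\Delta t\,h^{4}$ term is produced by the explicit $\tfrac{2\gamma\Delta t}{3}$ in front of $\|\nabla(u-v_{h})\|_{\infty,0}^{2}$ together with the $\beta\Delta t$ piece of $\beta(1+\Delta t)\|\nabla(u-v_{h})_{t}\|_{2,0}^{2}$. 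The $\Delta t^{4}$ term from the time-consistency error passes through unchanged, and the initial-error terms transfer verbatim; the numerical factor $\tfrac{2}{3}$ in front of $\gamma\Delta t$ is absorbed into $C$, giving the coefficient $\gamma\Delta t+\beta$ that appears in (\ref{taylor-hood-error}).

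All parameter-dependent but $h$- and $\Delta t$-independent quantities --- powers of $\nu$, $\beta$, $\gamma$, the factor $\exp(C^{*}T)$, and the Sobolev norms of $u$ and $p$ inherited from (\ref{regularity1})--(\ref{regularity2}) --- are absorbed into the generic constant $C$. The argument is essentially mechanical and I foresee no real obstacle; the only mildly delicate bookkeeping is keeping the $\Delta t$-factors in $\tfrac{2\gamma\Delta t}{3}\|\nabla(u-v_{h})\|_{\infty,0}^{2}$ and $\beta\Delta t\|\nabla(u-v_{h})_{t}\|_{2,0}^{2}$ explicit rather than folding them into $C$, since precisely these produce the separate $\Delta t\,h^{4}$ contribution on the right-hand side of (\ref{taylor-hood-error}).
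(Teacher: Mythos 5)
Your proposal is correct and is exactly the derivation the paper intends: the corollary is stated without proof precisely because it follows from Theorem \ref{error} by inserting the Taylor--Hood interpolation estimates ($h^{k+1}=h^{3}$ in $L^{2}$, $h^{k}=h^{2}$ in $H^{1}$, $h^{m+1}=h^{2}$ for the pressure) and absorbing all $h$- and $\Delta t$-independent parameters into $C$. The only nit is that the $\frac{\gamma^{2}\Delta t}{\beta}$ coefficient belongs in the $\Delta t\,h^{4}$ bucket rather than the pure $h^{4}$ bucket, but since both powers appear on the right-hand side of (\ref{taylor-hood-error}) this does not affect the result.
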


\section{Numerical Tests}\label{Numerical Tests}
In this section, we consider three test problems to illustrate the stability, convergence, and effectiveness of \textit{BDF2-mgd}. First, we consider the Taylor-Green benchmark problem to compute convergence rates and test both computational efficiency and pressure-robustness.  We follow with 2D channel flow over a step, where the effect of \textit{BDF2-mgd} on reducing the divergence error is illustrated.  Moreover, it is shown how $ \gamma $ and $ \beta $ influence this effect. Finally, we simulate flow past a cylinder to further present the effectiveness of \textit{BDF2-mgd}.  For all tests, we compare \textit{BDF2-mgd} with BDF2 (Non-Stabilized) and BDF2 with standard grad-div stabilization (Standard Stabilized).  All tests are implemented using FreeFem++ \cite{Hecht}. 

\subsection{Test of Convergence and Pressure Robustness}\label{Taylor-Green}
The Taylor-Green benchmark problem is commonly used to test convergence rates of new algorithms.  As such, we first illustrate convergence rates. The domain is $ [0,1]\times [0,1] $ and final time is $ T=1 $. 
Finite element meshes are generated via Delaunay-Vornoi triangulations with $ m $ points on each side of the boundary. 
The true solution is given by  
\begin{eqnarray*}
\begin{aligned}
&u(x,y,t) = ( -\cos(\omega\pi x)\sin(\omega\pi y), \sin(\omega\pi x)\cos(\omega\pi y) ) \exp( -2\omega^{2}\pi^{2}t/\tau ),\\
&p(x,y,t) = -\frac{1}{4} \Big( \cos(2\omega\pi x) + \cos(2\omega\pi y) \Big) \exp( -4\omega^{2}\pi^{2}t/\tau ).
\end{aligned}
\end{eqnarray*}
Here, $ \omega=1 $, $ \tau=100$, and $ Re=\frac{1}{\nu}=100 $.
The body force $ f $, initial condition, and boundary condition are determined by the true solution.  The grad-div parameters  are set to $ \gamma=1 $, $ \beta=0.2 $.   
The time step is $ \Delta t =  1/m$ where we vary $ m=16, 24, 32, 40 $, and 48 to calculate convergence rates. Table \ref{taylorgreen_rate} presents the results which are consistent with our theoretical analysis. 

To test computational efficiency, we set $ m=32 $ and vary $ \gamma$ and $\beta $.  We compare computational times of Standard Stabilized and \textit{BDF2-mgd}; for $ \gamma = \beta = 0 $, Standard Stabilized is equivalent to Non-Stabilized.  
For Standard Stabilized and $ Step\; 1 $ of \textit{BDF2-mgd}, we use a standard GMRES solver. 
If GMRES fails to converge at a single iterate, we denote the result with an ``F". 
For $ Step\; 2 $ of \textit{BDF2-mgd}, since it leads to an SPD system with same sparse coefficient matrix, at each timestep, we use UMFPACK.
The results are presented in Table \ref{taylorgreen_time}. 
The computing time of Standard Stabilized generally increases as $ \gamma $ and $ \beta $ increase. However, computing time of \textit{BDF2-mgd} is unaffected and therefore increasingly more efficient than Standard Stabilized.  Interestingly, GMRES fails to converge when $ \gamma \simeq 20$ and $ \beta \simeq 0.8 $, which are not very large values.

Lastly, we consider the issue of pressure-robustness.  An advantage of grad-div stabilization is that appropriate selection of the grad-div parameter $ \gamma $ can reduce the effect of the pressure error on the velocity error. Generally, for non-stabilized methods, velocity error estimates result in $ \nu^{-1}\inf\limits_{q_{h}\in Q_{h}}|\|p-p_{h}\||^{2}_{2,0} $ on the right hand side; see, e.g., Theorem 24 on p. 168 of \cite{Layton}. This same term appears for \textit{BDF2-mgd} in Theorems \ref{error} and \ref{error0}. However, for standard grad-div stabilized methods, this term is replaced by $ \gamma^{-1}\inf\limits_{q_{h}\in Q_{h}}|\|p-p_{h}\||^{2}_{2,0} $ in theoretical analyses.
To investigate the sharpness of our results, we vary $ Re $ while fixing $ \Delta t =  1/m = 1/32$ and $ \gamma=1 $, $ \beta=0.2 $.  We compare the velocity and pressure errors of Non-Stabilized, Standard Stabilized, and \textit{BDF2-mgd}.
Results are presented in Table \ref{taylorgreen_Re}. 
It is clear that \textit{velocity errors of Non-Stabilized}, especially for the divergence and gradient, \textit{grow as Re increases}; this is consistent with the corresponding theoretical result.
Alternatively, as $ Re $ is increased, velocity errors of Standard Stabilized and \textit{BDF2-mgd} are consistent with one another and maintain good approximations. This suggests that the effect of $ Re $ appearing in our analysis is not sharp.
This is an open problem, Section \ref{Conclusion}.

\begin{table}
	\begin{tabular}{ c  c  c  c  c  c  c  c	c}
	\hline			
	$m$ & $|\| e_{u} \||_{\infty,0}$ & Rate & $|\| \nabla \cdot e_{u} \||_{\infty,0}$ & Rate & $|\|\nabla \cdot e_{u} \||_{2,0}$ & Rate & $|\| e_{p} \||_{2,0}$ & Rate \\
	\hline
	16 & 2.47E-04 & - & 3.33E-03 & - & 2.82E-03 & - & 1.26E-04 & -\\
	24 & 8.07E-05 & 2.76 & 1.37E-03 & 2.19 & 1.18E-03 & 2.15 & 6.28E-05 & 1.72\\
	32 & 3.54E-05 & 2.86 & 7.21E-04 & 2.24 & 6.24E-04 & 2.21 & 2.87E-05 & 2.73\\
	40 & 1.90E-05 & 2.79 & 5.00E-04 & 1.64 & 4.34E-04 & 1.63 & 2.08E-05 & 1.43\\
	48 & 1.12E-05 & 2.88 & 3.58E-04 & 1.83 & 3.11E-04 & 1.82 & 1.32E-05 & 2.50\\
	\hline  
    \end{tabular}
    \caption{Errors and rates of velocity and pressure for \textit{BDF2-mgd} using the Taylor-Hood element.}
    \label{taylorgreen_rate}
\end{table}

\begin{table}
	\centering
	\begin{tabular}{ c  c  c  c }
		\hline	
		\multicolumn{2}{c} {Parameters} & \multicolumn{2}{c} {Time (s)} \\
		$\beta$ & $\gamma$ & Standard Stabilized & \textit{BDF2-mgd}   \\
		\hline
		0 & 0 & \textbf{17.77} & \textbf{25.09}  \\
		0 & 0.2 & \textbf{30.91} & \textbf{20.10} \\
		0 & 2 & \textbf{55.29} & \textbf{20.45} \\
		0 & 20 & F (\textbf{339.01}) & \textbf{27.99} \\
		0 & 200 & F (\textbf{507.41}) & \textbf{23.88} \\
		0 & 2,000 & F (\textbf{421.66}) & \textbf{17.34} \\
		0 & 20,000 & F (\textbf{27.44}) & \textbf{20.04} \\
		0.01 & 0.2 & 27.79 & 22.28 \\
		0.02 & 0.2 & 32.89 & 22.33 \\
		0.04 & 0.2 & 64.37 & 21.68 \\
		0.08 & 0.2 & 69.31 & 23.97 \\
		0.8 & 0.2 & F & 25.87 \\
		8 & 0.2 & F & 19.53 \\
		80 & 0.2 & F & 21.20 \\
		800 & 0.2 & F & 17.43 \\
		8,000 & 0.2 & F & 18.64 \\
		\hline  
	\end{tabular}
    \caption{Computational time and solver breakdown for Standard and BDF2-mgd with increasing grad-div parameters.}
    \label{taylorgreen_time}
\end{table}

\begin{table}[H]
	\begin{adjustbox}{max width=\textwidth}
	\begin{tabular}{ c  c  c  c  c  c  c  c	c c	cc}
		\hline			
		\multicolumn{1}{c} {Parameter} & \multicolumn{3}{c} {$|\| u_{h}- u \||_{\infty,0}$} & \multicolumn{3}{c} {$|\| \nabla \cdot(u_{h} - u) \||_{2,0}$} & \multicolumn{3}{c} {$|\| \nabla(u_{h} - u) \||_{2,0}$} &\multicolumn{1}{c} {$|\| p_{h} - p \||_{2,0}$}\\
		\hline
		\multirow{2}{*}{$Re$} & 	\multirow{2}{*}{Non-Stabilized}& 
		Standard  & 
        \multirow{2}{*}{\textit{BDF2-mgd}} & 
		\multirow{2}{*}{Non-Stabilized}& 
		Standard & 
		\multirow{2}{*}{\textit{BDF2-mgd}}  & 
		\multirow{2}{*}{Non-Stabilized}& 
		Standard & 
		\multirow{2}{*}{\textit{BDF2-mgd}}  & 
		\multirow{2}{*}{All}\\
		& &Stabilized & 
		& &Stabilized & 
		& &Stabilized & 		
		&&
		\\
		\hline
		1 & 1.26E-03 & 1.26E-03 & 1.26E-03 & 3.61E-05 & 2.43E-05 & 5.33E-05 & 
		4.05E-03 & 4.05E-03 & 4.09E-03 & 
		5.79E-07
		\\
		1e1 & 2.45E-05 & 2.42E-05 & 2.70E-05 & 4.71E-04 & 1.39E-04 & 2.01E-04 & 
		5.48E-04 & 3.67E-04 & 2.07E-03& 
		1.09E-05
		\\
		1e2 & 9.95E-05 & 1.80E-05 & 3.57E-05 & 1.20E-02 & 5.40E-04 & 6.44E-04 & 
		1.27E-02 & 2.80E-03 & 6.65E-03& 
		2.99E-05
		\\
		1e3 & 1.05E-03 & 5.85E-05 & 8.90E-05 & 1.03E-01 & 7.20E-04 & 7.51E-04 & 
		1.31E-01 & 9.04E-03 & 1.15E-02& 
		3.57E-05
		\\
		1e4 & 1.30E-01 & 2.23E-04 & 2.62E-04 & 
		6.27 & 7.61E-04 & 7.78E-04 & 
		7.81 & 2.40E-02 & 3.01E-02& 
		3.64E-05
		\\
		1e5 & 3.57E-01 & 3.99E-04 & 3.50E-04 & 
		16.36 & 7.76E-04 & 7.84E-04 & 
		25.23 & 3.61E-02 & 3.95E-02& 
		3.64E-05
		\\
		1e6 & 4.33E-01 & 4.32E-04 & 3.63E-04 & 
		20.34 & 7.78E-04 & 7.85E-04 & 
		32.44 & 3.84E-02 & 4.09E-02& 
		3.64E-05
		\\
		\hline  
	\end{tabular}
\end{adjustbox}
\caption{Comparison of velocity and pressure errors with increasing $ Re $.}\label{taylorgreen_Re}
\end{table}

\subsection{2D Channel Flow Over a Step}
We now illustrate the effect of $ Step\; 2 $ of \textit{BDF2-mgd} by comparing Non-Stabilized, Standard Stabilized, and \textit{BDF2-mgd} simulations of 2D channel flow over a step \cite{Fragos, John3}. The channel considered here is $ [0,40]\times [0,10] $ with a $ 1\times 1 $ step on the bottom for $x \in [5,6] $. 
A flow with $ \nu=1/600$ passes though this channel from left to right. For boundary conditions, the left inlet and right outlet are given by
\begin{eqnarray*}
\begin{aligned}
&u(0,y,t) = u(40,y,t) = y(10-y)/25,\\
&v(0,y,t) = v(40,y,t) = 0.
\end{aligned}
\end{eqnarray*}
No-slip, $ u=0 $, boundary conditions are imposed elsewhere.
Taylor-Hood elements are used, comprising a mesh with $ 31,089 $ degrees of freedom. The body force $ f = 0 $, final time $ T = 40 $, and time step $ \Delta t =  0.01$. 
The selected grad-div parameters are $ \gamma = 0.1, 0.2, 1 $ and $ \beta = 0, 0.1, 0.2, 1 $. 
$ \|\nabla\cdot u(t^{n})\| $ is computed and plotted in Figure \ref{figure-step1}. 
Also, plots of flow speed and divergence contours, at the final time, with $ \gamma=1, \beta=0 $, are presented in Figure \ref{figure-step2}.

\begin{figure}[htbp]
	\centering
	\includegraphics[width=0.7\textwidth]{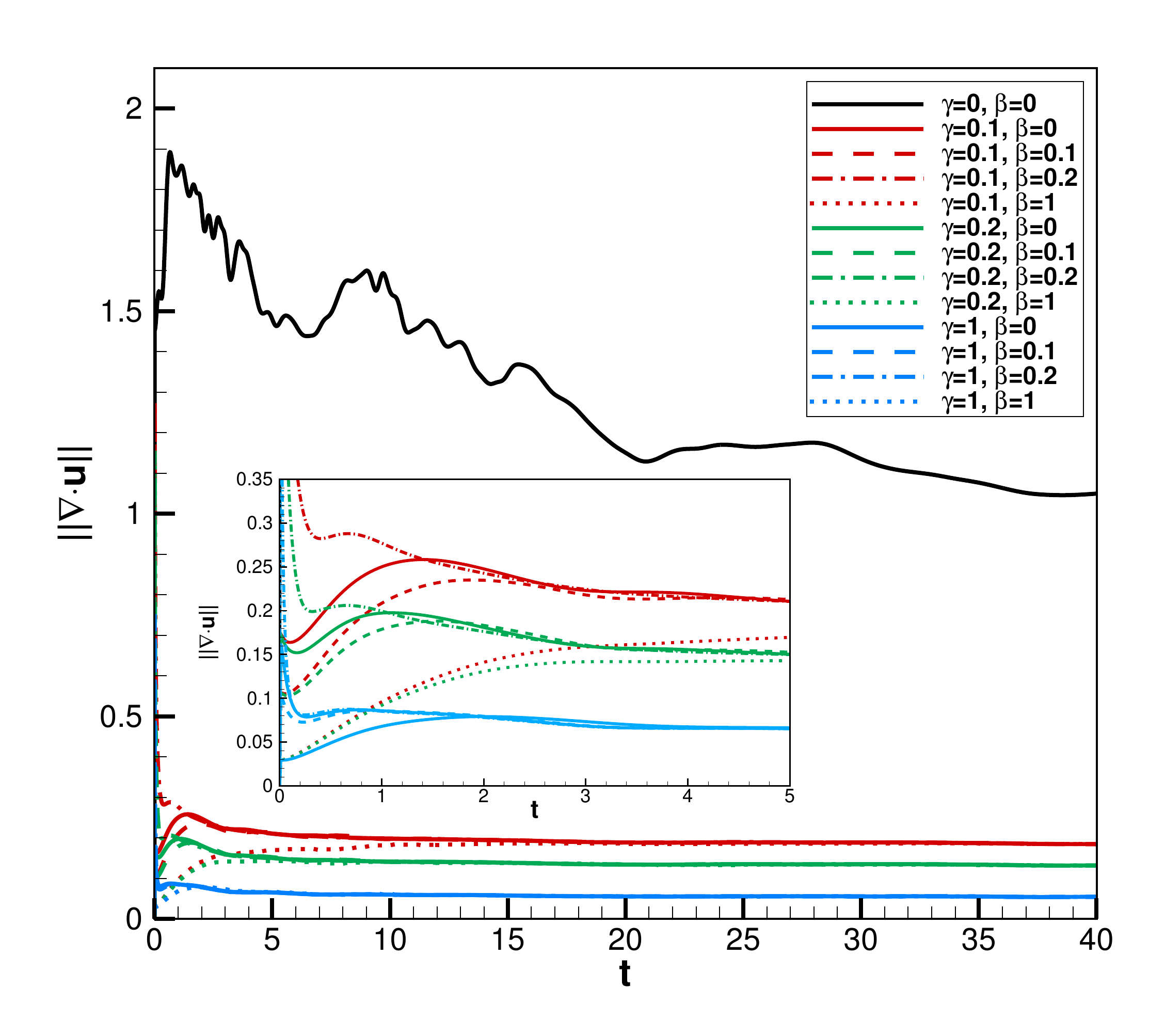}
	\caption{$ \|\nabla\cdot u\| $ vs time for Non-Stabilized and \textit{BDF2-mgd}. }
	\label{figure-step1}
\end{figure}

\begin{figure}[htbp]
	\centering
	\includegraphics[width=0.46\textwidth]{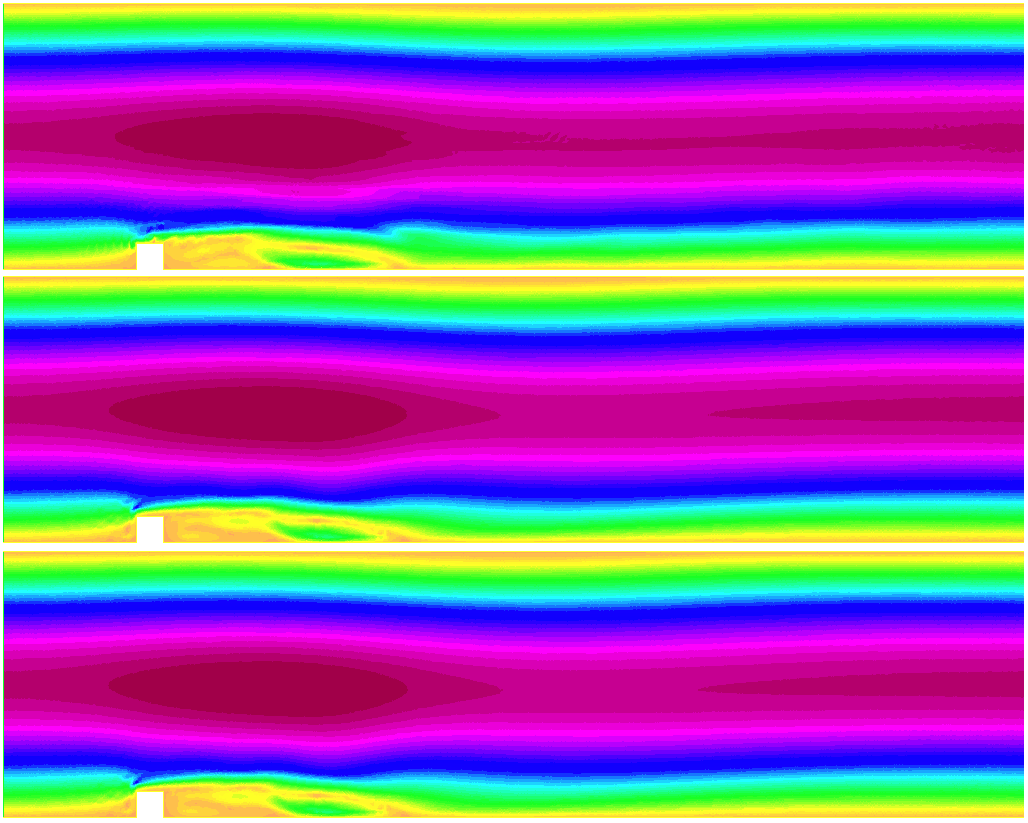}
	\includegraphics[width=0.5\textwidth]{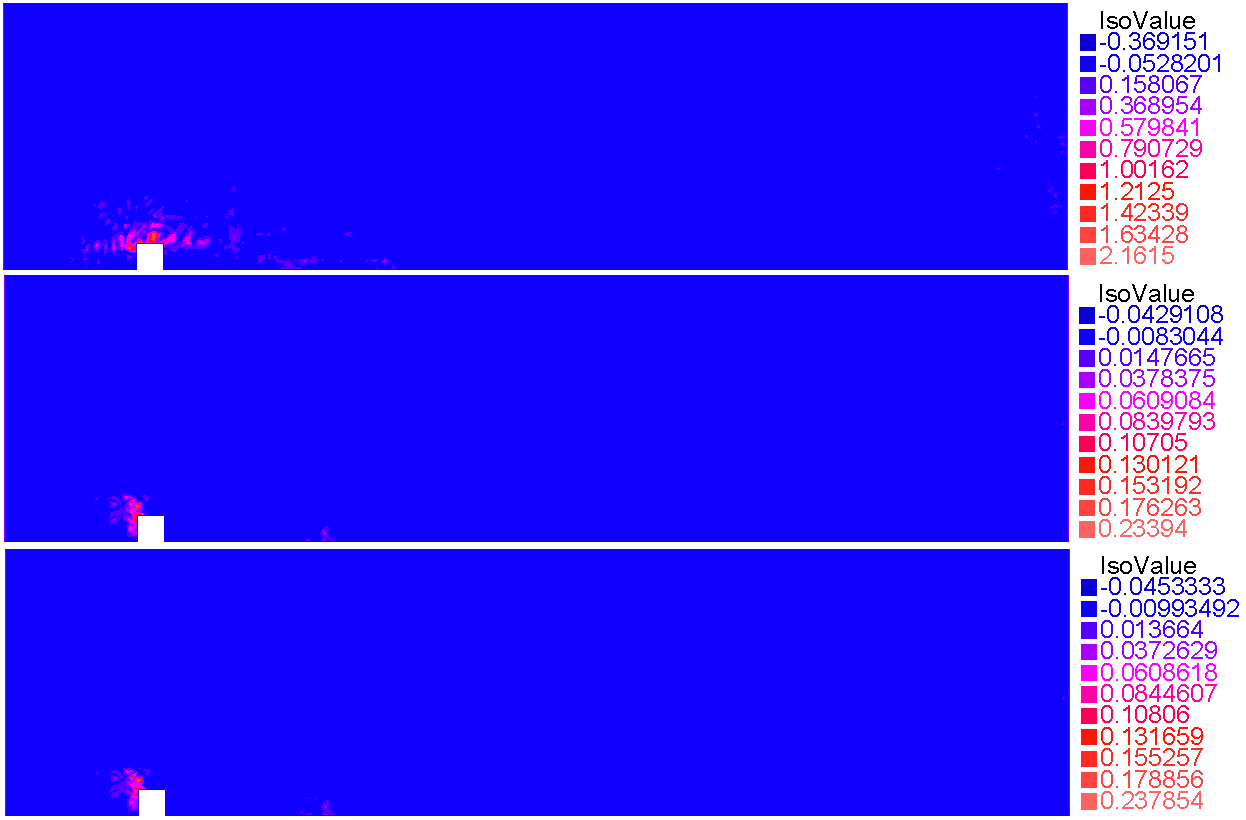}
	\caption{Flow speed and divergence contours at time $ t=40 $ for Non-Stabilized (top), Standard Stabilized (middle) and \textit{BDF2-mgd} (down) with $ \gamma=1, \beta=0 $. }
	\label{figure-step2}
\end{figure}

As shown in Figure \ref{figure-step1}, $ Step\; 2 $ of \textit{BDF2-mgd} greatly reduces the divergence error $\|\nabla\cdot u\|$ compared with Non-Stabilized. Observing the curves of different $ \gamma $ and $ \beta $, it's interesting to find that the value of $ \beta $ determines the minimum divergence error that can be reached in the beginning and the value of $ \gamma $ determines the long-time divergence error.  This is consistent with \cite{Fiordilino}.
In Figure \ref{figure-step2}, we see that results for $ Step\; 2 $ of \textit{BDF2-mgd} are consistent with Standard Stablilzed; both reduce divergence error, especially around the step.

\subsection{2D Channel Flow Past a Cylinder}
In order to further test the effectiveness of \textit{BDF2-mgd}, we consider channel flow past a cylinder \cite{Schafer}. Like the Taylor-Green benchmark, this is a common test problem for new algorithms.
The channel domain is $ [0,2.2]\times [0,0.41] $ with a cylinder of diameter $ 0.1 $ within. The center of the cylinder is $ (0.2, 0.2) $. A flow with $ \nu=0.001, \rho=1 $ passes though this channel from left to right. No body forces are present, $ f = 0 $.
Left in-flow and right out-flow boundaries are given by 
\begin{eqnarray*}
\begin{aligned}
&u(0,y,t) = u(2.2,y,t) = \frac{6y(0.41-y)}{0.41^{2}}\sin(\frac{\pi t}{8}),\\
&v(0,y,t) = v(2.2,y,t) = 0.
\end{aligned}
\end{eqnarray*}
The no-slip boundary condition is prescribed elsewhere.

We use Taylor-Hood elements on a mesh with $ 41,042 $ degree of freedom and final time $ T = 8 $. The time step is $ \Delta t =  0.001$. The grad-div parameters are set to $ \gamma = 5\nu $ and $ \beta = 0$. 
Drag $ c _{d}(t) $ and lift $ c_{l}(t) $ coefficients are calculated; maximum values are presented in Table \ref{table=cylinder}. The pressure difference between the front and back of the cylinder ($ \Delta p(t)=p(0.15,0.2,t)-p(0.25,0.2,t) $) and both the $L^2(0,T;L^{2}(\Omega))$ and $L^{\infty}(0,T;L^{2}(\Omega))$ norms of the velocity divergence are also tabulated in Table \ref{table=cylinder}. 
Furthermore, Figure \ref{figure-cylinder} shows velocity speed and vectors for \textit{BDF2-mgd} at times $ t=4, 6, 7, 8$, which are consistent with that in \cite{Bowers, Fiordilino, John4, Linke}.

In Table \ref{table=cylinder}, we see that grad-div stabilization effectively reduces the divergence error, as expected. This results in improved accuracy of Standard Stabilized and \textit{BDF2-mgd} over the Non-Stabilized solution.  In particular, both stabilized algorithms produce accurate lift coefficients and smaller divergence errors. 

\begin{table}[htbp]\normalsize
	\centering
	\begin{tabular}{ c	c	c	c	c	c}
		\hline			
		Method & $c^{max}_{d}$ & $c^{max}_{l}$ & $\Delta p^{N}_{h}$ & $|\|\nabla \cdot u_{h}|\|_{2,0}$ & $\| \nabla \cdot u^{N}_{h}\|$\\
		\hline
		Non-Stabilized & 2.950 & \textbf{0.441} & -0.1084 & \textbf{1.967} & \textbf{0.186} \\
		Standard Stabilized & 2.950 & 0.477 & -0.1115 & 0.859 & 0.072\\
		\textit{BDF2-mgd} & 2.950 & 0.475 & -0.1115 & 0.906 & 0.074\\
		\hline  
	\end{tabular}
	\caption{Maximum lift, drag coefficients, pressure drop, and divergence quantities for flow past a cylinder.}\label{table=cylinder}
\end{table}

\begin{figure}[htbp]
	\centering
	\includegraphics[width=0.48\textwidth]{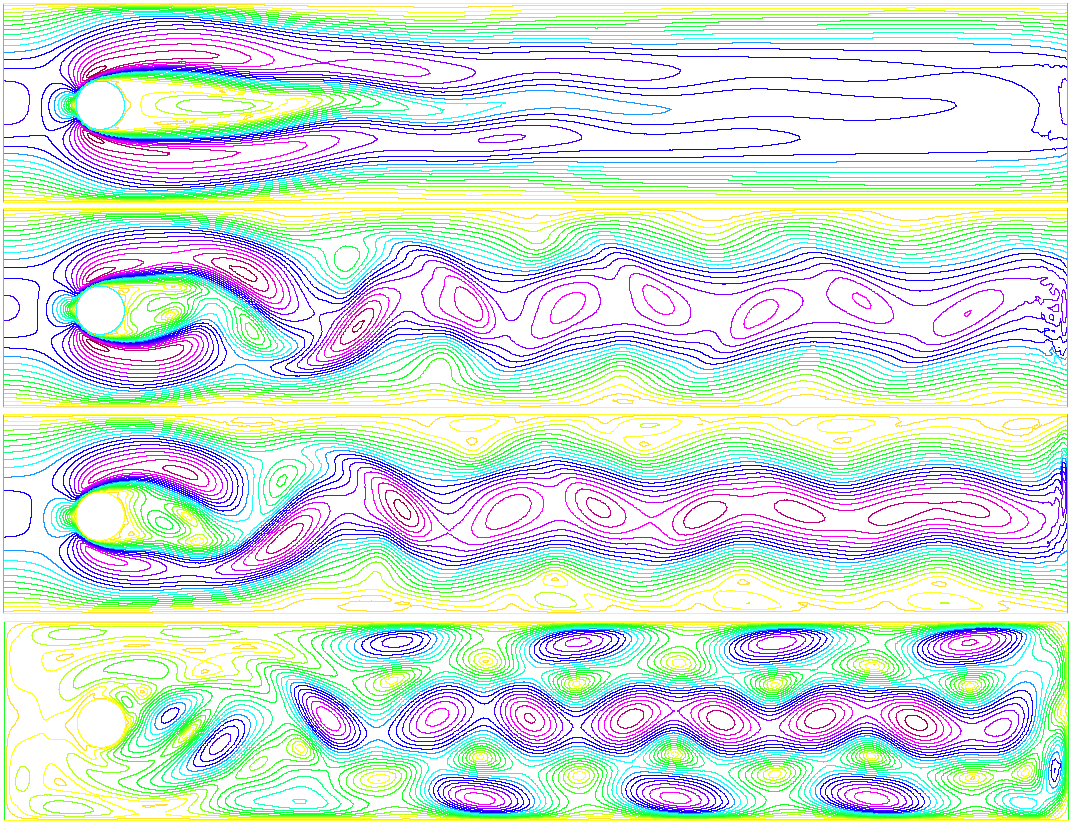}
	\includegraphics[width=0.48\textwidth]{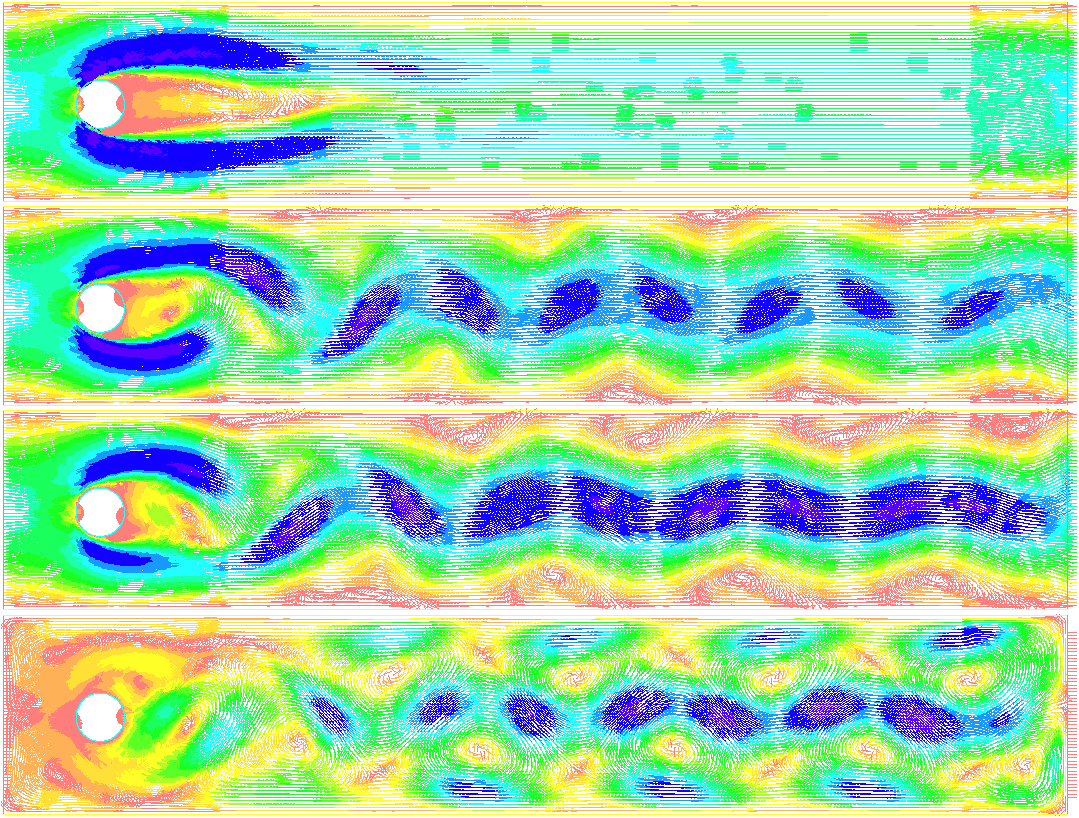}
	\caption{Flow speed and vectors for flow past a cylinder at times  t=4, 6, 7, and 8. }
	\label{figure-cylinder}
\end{figure}

\section{Conclusion}\label{Conclusion}
We developed a BDF2 time-discrete, modular grad-div stabilization algorithm (\textit{BDF2-mgd}) for the time dependent Navier-Stokes equations.
Compared with methods implementing standard grad-div stabilization, our algorithm produces consistent numerical approximations while avoiding solver breakdown for large grad-div parameters.
We prove that this algorithm is unconditionally, nonlinearly, energy stable and second-order accurate in time.
Numerical tests illustrate the theoretical results and computational efficiency.   

To impose discrete versions of $-\beta \nabla \nabla \cdot u_t - \gamma \nabla \nabla \cdot u$, modular grad-div requires a solve of the form $\big(\frac{1}{\Delta t}I + (\frac{\beta}{\delta t} + \gamma) G\big)u = RHS$, where $G$ is the symmetric positive semi-definite grad-div matrix.  For constant $\Delta t$, efficiency increases can exploit the fact that the matrix is fixed.  For variable timestep and $\beta =0$, the matrix is a variable shift of $G$ and efficient algorithms exist exploiting this structure.  Important next steps include investigating, analytically, the $\nu$ dependence of $ \nu^{-1}\inf\limits_{q_{h}\in Q_{h}}|\|p-p_{h}\||^{2}_{2,0} $ in Theorems \ref{error} and \ref{error0}, extending these results to alternative numerical methods, and including sparse, effective variants of grad-div stabilization.


\begin{thebibliography}{00}
    \bibitem{Ad} {R.A. Adams, \textit{Sobolev spaces}, Academic press, New York, 1995.}
	\bibitem{Bowers} {A. L. Bowers, S. Le Borne, and L. G. Rebholz, \textit{Error analysis and iterative solvers for Navier-Stokes projection methods with standard and sparse grad-div stabilization}, Comput. Methods Appl. Mech. Engrg., 275 (2014), pp. 1-19.}
	\bibitem{Decaria2} {V. DeCaria, W. Layton, and M. McLaughlin, \textit{A conservative, second order, unconditionally stable artificial compression method}, Comput. Methods Appl. Mech. Engrg., 325 (2017), pp. 733-747.}
	\bibitem{Decaria} {V. DeCaria, W. J. Layton, A. Pakzad, Y. Rong, N. Sahin, and H. Zhao, \textit{On the determination of the grad-div criterion}, Apr. 2017, https://arxiv.org/abs/1704.04171.}
	\bibitem{Fiordilino} {J. A. Fiordilino, W. J. Layton, and Y. Rong, \textit{Robust and Efficient Modular Grad-Div Stabilization}, Comput. Methods Appl. Mech. Engrg., 335 (2018), pp. 327-346.}
    \bibitem{Fragos} {V. P. Fragos, S. P. Psychoudaki, and N. A. Malamataris, \textit{Computer-aided analysis of flow past a surface-mounted obstacle}, Int. J. Numer. Meth. Fluids, 25 (1997), pp. 495-512.}
	\bibitem{Girault} {V. Girault and P. A. Raviart, \textit{Finite Element Approximation of the Navier-Stokes Equations}, Springer, Berlin, 1979.}
	\bibitem{Glowinski} {R. Glowinski and P. Le Tallec, \textit{Augmented Lagrangian and operator-splitting methods in nonlinear mechanics}, SIAM, Philadelphia, 1989.}
	\bibitem{Guermond} {J.-L. Guermond and P. D. Minev, \textit{High-order time stepping for the Navier-Stokes equations with minimal computational complexity}, Journal of Computational and Applied Mathematics, 310 (2017), pp. 92-103.}
	\bibitem{Heister} {T. Heister and G. Rapin, \textit{Efficient augmented Lagrangian-type preconditioner for the Oseen problem using grad-div stabilization}, Int. J. Numer. Meth. Fluids, 71 (2013), pp. 118-134.}
	\bibitem{Heywood} {J. G. Heywood and R. Rannacher, \textit{Finite-Element Approximation of the Nonstationary Navier-Stokes Problem Part IV: Error Analysis for Second-Order Time Discretization}, SIAM J. Numer. Anal., 27 (1990), pp. 353-384.}
    \bibitem{Jenkins} {E. W. Jenkins, V. John, A. Linke, and L. G. Rebholz, \textit{On the parameter choice in grad-div stabilization for the Stokes equations}, Adv. Comput. Math., 40 (2014), pp. 491-516.}
    \bibitem{John4} {V. John, \textit{Reference values for drag and lift of a two-dimensional time-dependent flow around a cylinder}, Int. J. Numer. Meth. Fluids, 44 (2004), pp. 777-788.}
	\bibitem{John3} {V. John and A. Liakos, \textit{Time-dependent flow across a step: the slip with friction boundary condition}, Int. J. Numer. Meth. Fluids, 50 (2006), pp. 713-731.}
    \bibitem{John2} {V. John, A. Linke, C. Merdon, M. Neilan, and L. G. Rebholz, \textit{On the Divergence Constraint in Mixed Finite Element Methods for Incompressible Flows}, SIAM Review, 59 (2017), pp. 492-544.}
    \bibitem{Franca} {L. P. Franca and T. J. Hughes, \textit{Two classes of mixed finite element methods}, Computer Methods in Applied Mechanics and Engineering, 69.1 (1988), pp. 89-129.}
    \bibitem{Layton} {W. Layton, \textit{Introduction to the Numerical Analysis of Incompressible, Viscous Flows}, SIAM, Philadelphia, 2008.}
    \bibitem{Layton4} {W. J. Layton and L. G. Rebholz, \textit{On relaxation times in the Navier-Stokes-Voigt model}, International Journal of Computational Fluid Dynamics, 27 (2013), pp. 184-187.}
    \bibitem{Layton2} {W. Layton and L. Tobiska, \textit{A Two-Level Method with Backtracking for the Navier-Stokes Equations}, SIAM J. Numer. Anal., 35 (1998), pp. 2035-2054.}
    \bibitem{Layton3} {W. Layton, C. C. Manica, M. Neda, M. Olshanskii, and L. G. Rebholz, \textit{On the accuracy of the rotation form in simulations of the Navier-Stokes equations}, Journal of Computational Physics, 228 (2009), pp. 3433-3447.}
    \bibitem{Linke} {A. Linke and L. G. Rebholz, \textit{On a reduced sparsity stabilization of grad-div type for incompressible flow problems}, Comput. Methods Appl. Mech. Engrg., 261-262 (2013), pp. 142-153.}
    \bibitem{Linke2} {A. Linke, L. G. Rebholz, and N. E. Wilson, \textit{On the convergence rate of grad-div stabilized Taylor-Hood to Scott-Vogelius solutions for incompressible flow problems}, Journal of Mathematical Analysis and Applications., 381 (2011), pp. 612-626.}
    \bibitem{Lube} {G. Lube and M. A. Olshanskii, \textit{Stable finite-element calculation of incompressible flows using the rotation form of convection}, IMA Journal of Numerical Analysis, 22 (2002), pp. 437-461.}
	\bibitem{RY2} {Y. Rong and Y. Hou, \textit{A partitioned second-order method for magnetohydrodynamic flows at small magnetic reynolds numbers}, Numerical Methods for Partial Differential Equations, 33 (2017), pp. 1966-1986.}
	\bibitem{LeBorne} {S. Le Borne and L. Rebholz, \textit{Preconditioning sparse grad-div/augmented Lagrangian stabilized saddle point systems}, Computing and Visualization in Science, 16 (2015), pp. 259-269.}
    \bibitem{Olshanskii} {M. A. Olshanskii, \textit{A low order Galerkin finite element method for the Navier-Stokes equations of steady incompressible flow: a stabilization issue and iterative methods}, Comput. Methods Appl. Mech. Engrg., 191 (2002), pp. 5515-5536.}
    \bibitem{Olshanskii2} {M. A. Olshanskii and A. Reusken, \textit{Grad-div stabilization for Stokes equations}, Mathematics of Computation, 73 (2004), pp. 1699-1718.}
    \bibitem{Olshanskii4} {M. A. Olshanskii, G. Lube, T. Heister, and J. L\"{o}we, \textit{Grad-div stabilization and subgrid pressure models for the incompressible Navier-Stokes equations}, Comput. Methods Appl. Mech. Engrg., 198 (2009), pp. 3975-3988.}
    \bibitem{Prohl} {A. Prohl, \textit{On Pressure Approximation via Projection Methods for Nonstationary Incompressible Navier-Stokes Equations}, SIAM J. Numer. Anal., 47 (2008), pp. 158-180.}
    \bibitem{Roos} {H. G. Roos, M. Stynes and L. Tobiska, \textit{Robust Numerical Methods for Singularly Perturbed Differential Equations: Convection-Diffusion-Reaction and Flow Problems}, Springer, Berlin, 2008.}
    \bibitem{Schafer} {M. Sch\"{a}fer and S. Turek, \textit{Benchmark Computations of Laminar Flow Around a Cylinder}, Flow Simulation with High-Performance Computers II, 48 (1996), pp. 547-566.}
	\bibitem{Hecht} {F. Hecht, \textit{New development in FreeFem++},  J. Numer. Math., 20 (2012), pp. 251-265.}

\end{thebibliography}
\end{document}